\newtheorem{theorem}{Theorem}[section]
\newtheorem{lemma}[theorem]{Lemma}
\newtheorem{definition}[theorem]{Definition}
\newtheorem{proposition}[theorem]{Proposition}
\newtheorem{corollary}[theorem]{Corollary}
\newtheorem{remark}[theorem]{Remark}
\theoremstyle{definition}
\newcommand\pf{\begin{proof}}
\newcommand\epf{\end{proof}}
\newcommand\C{\mathbb{C}}
\newcommand\cs{\mathrm{C}}
\newcommand\rep{\mathcal R}
\renewcommand{\Im}{\image}
\DeclareMathOperator{\image}{Im}
\numberwithin{equation}{section}
\title{Quantum subgroups of the compact quantum group $SU_{-1}(3)$}
\author{Julien Bichon and Robert Yuncken}
\address{
Laboratoire de Math\'ematiques,
Universit\'e Blaise Pascal,
Complexe universitaire des C\'ezeaux,
63177~Aubi\`ere Cedex, France}
\email{Julien.Bichon@math.univ-bpclermont.fr - Robert.Yuncken@math.univ-bpclermont.fr}
\subjclass[2010]{16T05, 46L89}
\begin{document}

\begin{abstract}
 We study the (compact) quantum subgroups of the compact quantum group $SU_{-1}(3)$: we show that any non-classical such quantum subgroup is a twist of a compact subgroup of $SU(3)$ or is isomorphic to a quantum subgroup of $U_{-1}(2)$.
\end{abstract}

\maketitle

\section{introduction}

Quantum groups, named after Drinfeld's seminal work \cite{dr}, are natural Hopf algebraic generalizations of usual groups, arising in several branches of mathematics. As in classical group theory, the problem of their classification is a fundamental one.

An important aspect of the classification problem for quantum groups is the determination of the quantum subgroups of the known quantum groups. Let us recall some significant contributions to this topic.

\begin{enumerate}
 \item Podles \cite{po} was the first to consider this problem, and he classified the compact quantum subgroups of Woronowicz' quantum group $SU_{q}(2)$, for  $q \in [-1,1] \setminus\{0\}$. For other approaches, see \cite{bn} (for the finite quantum subgroups when $q=-1$) or \cite{fst} (when $q \not=-1$).
\item The finite quantum subgroups of $GL_q(n)$ were classified by M\"uller \cite{mu}, for $q$ an odd root of unity. From this work arose in particular an infinite family of pairwise non-isomorphic Hopf algebras of the same dimension: this was one of the series of counterexamples to Kaplansky's tenth conjecture. 
\item The work of M\"uller was subsequently generalized by Andruskiewitsch and Garcia in \cite{ag}, where they determined the quantum subgroups of $G_q$, with $G$ a connected, simply connected simple algebraic group and $q$ a root of unity of odd order. 
\item Another generalization of Muller's work was provided by Garcia \cite{gar}, who studied the two-parameter deformations $GL_{\alpha, \beta}(n)$, and classified the quantum subgroups in the odd root of unity case.
\item The compact quantum subgroups of $SO_{-1}(3)$ were determined by Banica and the first author in \cite{bb09}: these are the compact quantum groups acting faithfully on the classical space consisting of $4$ points.
\item The compact quantum subgroups of $O_n^*$, the half-liberated orthogonal quantum groups from \cite{basp}, were determined by Dubois-Violette and the first author in \cite{bicdub}. 
\end{enumerate}

From these works emerged several new interesting classes of quantum groups, and several hints of what the classification of quantum groups should be. The approaches in (2), (3) and (4) deal with non-semisimple quantum groups and do not treat the case $q=-1$, while this is certainly the most interesting case if we have semisimple finite quantum groups in mind.
The present paper is a contribution to the case $q=-1$: we determine the compact quantum subgroups of the compact quantum group $SU_{-1}(3)$, as follows.

\begin{theorem}\label{subSu3}
 Let $G$ be a non-classical compact quantum subgroup of $SU_{-1}(3)$. Then one of the following statements holds.
\begin{enumerate}
 \item  $G$ is isomorphic to a $K_{-1}$, a twist at $-1$ of a compact subgroup $K \subset SU(3)$ containing the subgroup of diagonal matrices having $\pm 1$ as entries.
\item  $G$ is isomorphic to a quantum subgroup of $U_{-1}(2)$. 
\end{enumerate}
\end{theorem}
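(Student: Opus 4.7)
\medskip
\noindent\textbf{Proof plan.}
The strategy exploits the description of $SU_{-1}(3)$ as a $2$-cocycle twist of $SU(3)$. Let $T_{\pm 1} \subset SU(3)$ denote the subgroup of diagonal matrices with entries in $\{\pm 1\}$, which is isomorphic to $(\ZZ/2\ZZ)^2$. There is a $2$-cocycle $\sigma$ on $\widehat{T_{\pm 1}}$ such that $\mathcal{O}(SU_{-1}(3)) \cong \mathcal{O}(SU(3))^\sigma$ as Hopf $*$-algebras, and whose restriction to $\mathcal{O}(T_{\pm 1})$ is trivial; hence $T_{\pm 1}$ embeds as a classical subgroup of $SU_{-1}(3)$. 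Given a non-classical compact quantum subgroup $G$ with defining Hopf $*$-algebra surjection $\pi : \mathcal{O}(SU_{-1}(3)) \to \mathcal{O}(G)$, I would dichotomise on whether the restriction $\pi|_{\mathcal{O}(T_{\pm 1})}$ is injective or not.

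Suppose first that $\pi$ is injective on $\mathcal{O}(T_{\pm 1})$, i.e.\ $T_{\pm 1}$ embeds in $G$. Then the cocycle $\sigma$ is still defined over the quotient $\mathcal{O}(G)$, so one can untwist: the Hopf algebra $\mathcal{O}(G)^{\sigma^{-1}}$ is commutative and receives a surjection from $\mathcal{O}(SU(3))$, hence is of the form $\mathcal{O}(K)$ for some compact subgroup $K \subset SU(3)$ containing $T_{\pm 1}$. Twisting back yields $G \cong K_{-1}$, which is case (1) of the theorem.

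Otherwise $\pi|_{\mathcal{O}(T_{\pm 1})}$ has a non-trivial kernel. If this kernel corresponded to killing all of $T_{\pm 1}$, the twisting cocycle would be fully trivialised and $\pi$ would factor through the commutative Hopf algebra $\mathcal{O}(SU(3))$, making $G$ classical; this is excluded by hypothesis. So the image of $T_{\pm 1}$ in $G$ is one of the three order-$2$ subgroups of $T_{\pm 1}$. The plan is to show that in each such case, $\pi$ factors through the quantum analog of the standard block embedding $U(2) \hookrightarrow SU(3)$, $A \mapsto \mathrm{diag}(A, (\det A)^{-1})$, giving an induced surjection $\mathcal{O}(U_{-1}(2)) \to \mathcal{O}(G)$ realising $G$ as a quantum subgroup of $U_{-1}(2)$.

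The main obstacle is this last step. One must explicitly construct the Hopf algebra map $\mathcal{O}(SU_{-1}(3)) \to \mathcal{O}(U_{-1}(2))$ in a way compatible with the respective cocycle-twist structures, and then, for each of the three order-$2$ subgroups $H \subset T_{\pm 1}$, identify the quotient of $\mathcal{O}(SU_{-1}(3))$ by the relations identifying $H$ with the unit as a Hopf subalgebra of $\mathcal{O}(U_{-1}(2))$. The Weyl group of $SU(3)$ acts transitively on the three order-$2$ subgroups of $T_{\pm 1}$ and preserves the twist, so it should suffice to treat one representative case and transport the result by this symmetry. Carrying out this reduction is the technical heart of the argument; once in place, combining it with the first case concludes the proof.
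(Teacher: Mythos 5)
Your first case is sound and coincides with the paper's mechanism: if the canonical surjection $\rep(SU_{-1}(3))\to\rep(\Gamma)$ (where $\Gamma=T_{\pm1}$) factors through $\rep(G)$, then Proposition \ref{corespsubgroup} lets you untwist, and the untwisted quotient is commutative because it is a quotient of $\rep(SU(3))$, whence $G\simeq K_{-1}$. (Note, though, that $\rep(T_{\pm1})$ is a quotient, not a subalgebra, of $\rep(SU_{-1}(3))$, so the condition should be phrased as ``the projection onto $\rep(\Gamma)$ factors through $\pi$'' rather than ``$\pi$ is injective on $\rep(T_{\pm1})$''.)

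The rest of the argument has a genuine gap, in two places. First, the claim that if no nontrivial character of $\Gamma$ survives then ``the twisting cocycle would be fully trivialised'' and $G$ would be classical is backwards: the cocycle lives on $\rep(\Gamma)$, and when the surjection onto $\rep(\Gamma)$ does \emph{not} factor through $\rep(G)$ you lose the ability to untwist altogether; nothing gets trivialised, and no reason is given why such a $G$ must be classical. Second, and more seriously, the step ``if the surviving subgroup $H\subset\Gamma$ has order $2$ then $\pi$ factors through $\rep(U_{-1}(2))$'' is precisely the content of the theorem, and no mechanism is offered for it: knowing which one-dimensional representations $\varepsilon_s$ survive in $C(G)$ gives no control over whether the off-diagonal generators $u_{ik},u_{ki}$ ($k\neq i$) are killed by $\pi$, which is what Lemma \ref{reduc} requires. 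The paper's proof runs on a different dichotomy --- whether the $\cs^*$-algebra $C(G)$ has an irreducible representation of dimension $4=|\Gamma|$ or not --- and in the second case it identifies every irreducible representation of $C(G)$ as a subrepresentation of one of the $4$-dimensional representations $\theta_g$ of Proposition \ref{maintool}, uses the fusion rules $\theta_g\otimes\theta_h\simeq\oplus_{s\in\Gamma}\theta_{gsh}$ and $\varepsilon_g\otimes\theta_h\simeq\theta_{gh}$ to force all of them to be supported on a single block $SU(3)^{[i,i]}$, and then invokes the fact that the irreducible representations of a $\cs^*$-algebra separate its elements to conclude $\pi(u_{ik})=0=\pi(u_{ki})$. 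Some substitute for this representation-theoretic analysis (or another route to the vanishing of the off-diagonal generators) is indispensable; the Weyl-group symmetry you invoke only reduces the three order-$2$ subgroups to one and does not address this.
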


The quantum subgroups of $U_{-1}(2)$ can be determined by using similar techniques to those of Podles \cite{po}.
We shall not discuss this here. Note  that it follows from Theorem \ref{subSu3} and its proof that if $G$ is a non-classical compact quantum subgroup of $SU_{-1}(3)$ acting irreducibly on $\C^3$,
then  $G$ is isomorphic to a $K_{-1}$, a twist at $-1$ of a compact subgroup $K \subset SU(3)$ containing the subgroup of diagonal matrices having $\pm 1$ as entries, and acting irreducibly on $\C^3$. Thus for any quantum subgroup of $SU_{-1}(3)$ acting irreducibly on the fundamental representation, the tensor category of representations is  symmetric (in Hopf algebra terms, the Hopf algebra $\rep(G)$ is cotriangular). This seems to be an interesting phenomenon, that does not hold in general (e.g. for the quantum group $U_{-1}(2)$).

As in \cite{bb09}, the starting point is that $SU_{-1}(3)$ is a twist at $-1$ of the classical group $SU(3)$ (a 2-cocycle deformation).
This furnishes a number of representation-theoretic tools, developed in Section 3, to study the $\cs^*$-algebra $C(SU_{-1}(3))$ and its quotients, which are used in an essential way to prove Theorem \ref{subSu3}. Note that the representation theory of twisted function algebras on finite groups is fully discussed in \cite{eg2}, with a precise description of the irreducible representations. However the fusion rules, which would lead to the full classification of the Hopf algebra quotients, are not discussed in \cite{eg2}, and we do not see any general method to compute them. What we get here in the case of $SU_{-1}(3)$ are some partial fusion rules, for some special representations of $C(SU_{-1}(3))$, which however are sufficiently generic to get the necessary information to classify the quantum subgroups. 

The paper is organized as follows. Section 2 consists of preliminaries. In Section 3 we recall the twisting (2-cocycle deformation) procedure for Hopf algebras and develop the aforementioned representation-theoretic tools for representations of twisted $\cs^*$-algebras of functions. In Section 4 we briefly recall how the quantum group $SU_{-1}(2m+1)$ can be obtained by twisting, and Section 5 is devoted to the proof of Theorem \ref{subSu3}.

\bigskip

We would like to thank S.~Echterhoff for informative discussions.

\section{Preliminaries}

\subsection{Compact quantum groups} We first recall some basic facts concerning compact quantum groups. The book \cite{ks}
is a convenient reference for the topic of compact quantum groups,
and all the defintions we omit can be found there.
All algebras in this paper will be unital, and 
$\otimes$ will denote the minimal tensor product of $\cs^*$-algebras
as well as the algebraic tensor product; this should cause no confusion.

\begin{definition}
A \textbf{Woronowicz algebra} is a $\cs^*$-algebra $A$ endowed with  
a $*$-morphism 
$\Delta : A \to A \otimes A$ 
satisfying the coassociativity condition and the cancellation law
$$\overline{\Delta(A)(A \otimes 1)} = A \otimes A 
= \overline{\Delta(A)(1\otimes A)}$$
The morphism $\Delta$ is called
the comultiplication of $A$.
\end{definition}

The category of 
Woronowicz algebras is defined in the obvious way (see \cite{wa0} for details). 
A commutative Woronowicz algebra is necessarily isomorphic with $C(G)$, the
 algebra of continuous functions on a compact group $G$, unique up to isomorphism,  
and the category of \textbf{compact quantum groups} is defined to be the category
dual to the category of Woronowicz algebras.
Hence to any Woronowicz algebra $A$ corresponds a unique compact quantum group
according to the heuristic formula $A = C(G)$. 

Woronowicz's original definition for matrix compact quantum groups
\cite{wo} is still the most useful in concrete situations, and 
we have the following fundamental result \cite{wo2}.

\begin{theorem}
 Let $A$ be a $\cs^*$-algebra endowed with a $*$-morphism
 $\Delta : A \to A \otimes A$. Then $A$ is a Woronowicz algebra if and only if
there exists a family of unitary matrices
$(u^\lambda)_{\lambda \in \Lambda} \in M_{d_\lambda}(A)$ satisfying the following three 
conditions.

\begin{enumerate}
\item The $*$-subalgebra $A_0$ generated by the entries $u_{ij}^\lambda$
of the matrices $(u^\lambda)_{\lambda \in \Lambda}$ is dense in $A$.

\item For $\lambda \in \Lambda$ and $i,j \in \{1, \ldots, d_\lambda \}$,
one has $\Delta(u_{ij}^\lambda) = \sum_{k=1}^{d_{\lambda}}
u_{ik}^\lambda \otimes u_{kj}^\lambda$.

\item For $\lambda \in \Lambda$, the transpose matrix $(u^\lambda)^t$ is
invertible.
\end{enumerate}
\end{theorem}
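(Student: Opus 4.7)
The plan is to prove the two implications separately. For ($\Rightarrow$), I would invoke Woronowicz's general theory: any Woronowicz algebra $A$ admits a faithful Haar state $h$, and a dense Hopf $*$-subalgebra $A_0 \subset A$ spanned by matrix coefficients of finite-dimensional unitary corepresentations, which decomposes by a Peter--Weyl-type theorem as the algebraic direct sum over equivalence classes of irreducible such corepresentations. Picking one representative $u^\lambda \in M_{d_\lambda}(A)$ per class gives conditions (1) and (2). Condition (3) is the assertion that the contragredient $\bar u^\lambda$ is equivalent to a unitary corepresentation---a hallmark of compactness---forcing $(u^\lambda)^t$ to be invertible.

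For ($\Leftarrow$), the strategy is to first upgrade $A_0$ to a Hopf $*$-algebra and then deduce the cancellation laws. The comultiplication restricts to $A_0$ by (2). Define a counit on generators by $\eps(u_{ij}^\lambda) = \delta_{ij}$ and an antipode by $S(u_{ij}^\lambda) = (u_{ji}^\lambda)^*$. Unitarity of $u^\lambda$ makes $S$ satisfy the antipode identities on generators; condition (3), the invertibility of $(u^\lambda)^t$, extends $S$ to $(u_{ij}^\lambda)^*$ (expressed through entries of the inverse matrix) and turns $S$ into an antiautomorphism of $A_0$.

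Once $A_0$ is a Hopf $*$-algebra, the algebraic cancellation laws follow from a standard Hopf-algebraic manipulation: for $a,b \in A_0$,
\[ a \otimes b = \sum_{(a)} \Delta(a_{(1)}) \cdot (S(a_{(2)})b \otimes 1), \]
yielding $A_0 \otimes A_0 \subseteq \Delta(A_0)(A_0 \otimes 1)$, and symmetrically on the right using $S^{-1}$. Density of $A_0$ in $A$ together with the continuity of $\Delta$ then upgrades these algebraic equalities to the topological cancellation laws $\overline{\Delta(A)(A \otimes 1)} = A \otimes A = \overline{\Delta(A)(1 \otimes A)}$.

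The main obstacle is verifying that $\eps$ and $S$ are well-defined on $A_0$: the generators $u_{ij}^\lambda$ may satisfy nontrivial relations coming from the $\cs^*$-algebraic structure of $A$, and one must check that both maps respect every such relation. Condition (3) is precisely what is needed here, ensuring a coherent inverse antipode can be constructed uniformly across the family and, ultimately, that the algebraic machinery of Hopf $*$-algebras becomes available inside $A$.
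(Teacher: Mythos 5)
First, note that the paper does not prove this theorem: it is quoted verbatim from Woronowicz \cite{wo2} as a known result, so there is no in-paper proof to compare against. Judging your proposal on its own merits, the forward implication is an acceptable appeal to the standard theory (Haar state, Peter--Weyl decomposition, and the fact that the conjugate $\overline{u^\lambda}=((u^\lambda)^t)^*$ of a unitary corepresentation is again invertible). The problem is the converse, which is where the whole content of the theorem lies.

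You correctly identify the crux --- that $\varepsilon(u_{ij}^\lambda)=\delta_{ij}$ and $S(u_{ij}^\lambda)=(u_{ji}^\lambda)^*$ must be shown to respect every relation holding among the generators inside the $\cs^*$-algebra $A$ --- but you then dispose of it by asserting that condition (3) ``is precisely what is needed here.'' That is not an argument, and it is not true as stated: invertibility of $(u^\lambda)^t$ does not by itself imply that the assignment $u_{ij}^\lambda\mapsto\delta_{ij}$ annihilates an arbitrary $*$-algebraic relation in $A$. Establishing the Hopf $*$-structure on $A_0$ is the hard part of Woronowicz's theorem and goes through the corepresentation machinery (construction of the Haar state by averaging, orthogonality relations, identification of $A_0$ with the span of coefficients of finite-dimensional invertible corepresentations); it cannot be obtained by ``defining maps on generators.'' Moreover, the density statement can and should be proved without first building the antipode: every monomial in the $u_{ij}^\lambda$ and their adjoints is a matrix coefficient of a tensor product of the corepresentations $u^\lambda$ and $\overline{u^\lambda}$, each of which is invertible (by unitarity, resp.\ by condition (3)), and the identity $\sum_k \Delta(v_{ik})\bigl(1\otimes (v^{-1})_{kj}\bigr)=v_{ij}\otimes 1$ for an invertible corepresentation $v$ yields the cancellation laws directly --- this is essentially Woronowicz's route, and it makes transparent why (3) is indispensable (without it $\overline{u^\lambda}$ need not be invertible and the argument fails on the adjoint generators). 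Finally, a smaller slip: your displayed identity $a\otimes b=\sum \Delta(a_{(1)})\bigl(S(a_{(2)})b\otimes 1\bigr)$ is false in a general Hopf algebra (expand it: you get $a_{(1)}S(a_{(3)})b\otimes a_{(2)}$); the correct forms are $a\otimes b=\sum\Delta(a_{(1)})\bigl(1\otimes S(a_{(2)})b\bigr)$ for one leg and $a\otimes b=\sum\Delta(b_{(2)})\bigl(S^{-1}(b_{(1)})a\otimes 1\bigr)$ for the other.
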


In fact the $*$-algebra $A_0$ in the theorem is canonically defined, and is what we call a compact Hopf algebra (a CQG algebra in \cite{ks}): a Hopf $*$-algebra
having all its finite-dimensional comodules equivalent to unitary ones, or equivalently a  Hopf $*$-algebra having a positive and faithful Haar integral (see \cite{ks} for details).
The counit and antipode of $A_0$, denoted respectively $\varepsilon$ and $S$, 
are referred to as the counit and antipode of $A$.
The Hopf algebra $A_0$ is called the \textbf{algebra of representative functions}
on the compact quantum group $G$ dual to $A$, with another heuristic formula
$A_0 = \rep(G)$.

Conversely, starting from
a compact Hopf algebra, the universal
$\cs^*$-completion yields a Woronowicz algebra in the above sense: see the book 
\cite{ks}. In fact, in general, there are possibly several different $\cs^*$-norms
on $A_0$, in particular the reduced one (obtained from the GNS-construction associated to the Haar integral), but we will not be concerned with this problem, the compact quantum groups considered in this paper being co-amenable.

Of course, any group-theoretic statement about a compact quantum group $G$ must be
interpreted in terms of the Woronowicz  algebra $C(G)$ or of the Hopf $*$-algebra $\mathcal R(G)$. In particular, 
as usual, a (compact) \textbf{quantum subgroup} $H \subset G$ corresponds to a surjective Woronowicz algebra
morphism $C(G)\to C(H)$, or to a surjective
Hopf $*$-algebra morphism $\rep(G)\to\rep(H)$.

\subsection{The quantum groups $U_{-1}(n)$ and $SU_{-1}(n)$} In this subsection we briefly recall the definition of the compact quantum groups $U_{-1}(n)$ and $SU_{-1}(n)$ \cite{wo88,koe,ro}.

\begin{definition}
The $*$-algebra $\rep(U_{-1}(n))$ is the universal $*$-algebra generated by variables $(u_{ij})_{1\leq i,j \leq n}$ with relations making the matrix $u=(u_{ij})$ unitary and 
$$u_{ij}u_{kl} = (-1)^{\delta_{ik} + \delta_{jl}} u_{kl}u_{ij}, \ \forall i,j,k,l$$
The $\cs^*$-algebra $C(U_{-1}(n))$ is the enveloping $\cs^*$-algebra of $\rep(U_{-1}(n))$.
\end{definition}

The relations $u_{ij}^*u_{kl} = (-1)^{\delta_{ik} + \delta_{jl}} u_{kl}u_{ij}^*$ automatically hold in $\rep(U_{-1}(n))$
and  $C(U_{-1}(n))$, hence the matrix $u^t$ is also unitary.
It follows that  $\rep(U_{-1}(n))$ is a compact Hopf $*$-algebra, and hence that $C(U_{-1}(n))$ is a Woronowicz algebra, with comultiplication, counit and antipode defined by 
$$\Delta(u_{ij}) = \sum_k u_{ik}\otimes u_{kj}, \ 
\varepsilon(u_{ij}) = \delta_{ij}, \ S(u_{ij}) = u_{ji}^*$$
The quantum determinant
$$D= \sum_{\sigma \in S_n} u_{1\sigma(1)} \cdots u_{n\sigma(n)} =\sum_{\sigma \in S_n} u_{\sigma(1)1} \cdots u_{\sigma(n)n} $$
is a unitary central group-like element of $\rep(U_{-1}(n))$. 

\begin{definition}
 The $*$-algebra  $\rep(SU_{-1}(n))$ is the quotient of  $\rep(U_{-1}(n))$ by the $*$-ideal generated by $D-1$, and the $\cs^*$-algebra $C(SU_{-1}(n))$ is the enveloping $\cs^*$-algebra of $\rep(SU_{-1}(n))$.
\end{definition}

It follows, since $D$ is group-like, that  $\rep(SU_{-1}(n))$ is a compact Hopf $*$-algebra, and  that $\cs(SU_{-1}(n))$ is a Woronowicz algebra, with comultiplication, counit and antipode defined by the same formulas as above.

The following Lemma will be used in Section 5.

\begin{lemma}\label{reduc}
 For any $i\in \{1, \ldots , n+1\}$, there exists a surjective Hopf $*$-algebra map $\pi_i :  \rep(SU_{-1}(n+1))\rightarrow \rep(U_{-1}(n))$ whose kernel is the Hopf $*$-ideal generated by the elements $u_{ki}$, $u_{ik}$, $k \not=i$. In particular, if $\pi : \rep(SU_{-1}(n+1))\twoheadrightarrow A$ is a surjective Hopf $*$-algebra  map such that
for some fixed $i$ we have $\pi(u_{ki})=0=\pi(u_{ik})$ for  $k \not=i$, then there exists a surjective Hopf $*$-algebra map $\rep(U_{-1}(n)) \twoheadrightarrow A$.
\end{lemma}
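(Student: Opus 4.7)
The plan is to construct $\pi_i$ explicitly using the universal property of $\rep(SU_{-1}(n+1))$, and then identify its kernel by constructing an inverse to the induced quotient map. Fix $i$, write $v=(v_{rs})$ for the generating matrix of $\rep(U_{-1}(n))$, let $D$ denote its quantum determinant, and let $\rho:\{1,\ldots,n+1\}\setminus\{i\} \to \{1,\ldots,n\}$ be the order-preserving bijection. I would introduce the matrix $U \in M_{n+1}(\rep(U_{-1}(n)))$ defined by $U_{kl} = v_{\rho(k)\rho(l)}$ for $k,l \ne i$, $U_{ii} = D^{-1}$, and $U_{kl}=0$ otherwise, and verify that it satisfies the defining relations of $\rep(SU_{-1}(n+1))$. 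Unitarity of $U$ reduces block-wise to unitarity of $v$ together with $D^{-1} = D^*$; the anticommutation relations split into three cases (inherited from $v$ when no index equals $i$; trivially satisfied when exactly one entry lies in the vanishing $i$-th off-diagonal row or column; and reducing to centrality of $D$ in $\rep(U_{-1}(n))$ when one entry is $U_{ii}$). The critical relation is that the quantum determinant of $U$ equals $1$: only permutations of $\{1,\ldots,n+1\}$ fixing $i$ contribute a nonzero summand, centrality of $D^{-1}$ allows it to be pulled to the front of each summand, and the remaining sum is the quantum determinant $D$ of $v$, producing $D^{-1} \cdot D = 1$. The universal property then supplies the $*$-algebra map $\pi_i$; compatibility with $\Delta$, $\varepsilon$, $S$ is checked on generators using that $D$ is group-like, and surjectivity is immediate.

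It is clear that the Hopf $*$-ideal $I := \langle u_{ki}, u_{ik} : k \ne i\rangle$ lies in $\ker \pi_i$. For the reverse inclusion I would show that $\rep(SU_{-1}(n+1))/I \cong \rep(U_{-1}(n))$ by constructing an inverse to the induced map $\bar\pi_i$. In the quotient, the classes of $(u_{kl})_{k,l \ne i}$ form a unitary matrix satisfying the anticommutation relations---the vanishing of the off-diagonal $i$-th row and column entries collapses the relevant unitarity equations to the submatrix---so a Hopf $*$-algebra map $\phi: \rep(U_{-1}(n)) \to \rep(SU_{-1}(n+1))/I$ is defined by $v_{\rho(k)\rho(l)} \mapsto [u_{kl}]$. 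Reducing the determinant relation in $\rep(SU_{-1}(n+1))$ modulo $I$ shows, by the same pull-out argument, that $[u_{ii}]$ equals the inverse of the quantum determinant of the submatrix, i.e.\ $\phi(D^{-1})$. Consequently $\phi \circ \bar\pi_i = \id$ and $\bar\pi_i \circ \phi = \id$ on generators, so both are mutually inverse isomorphisms. The ``in particular'' statement is then a standard application of the universal property of the quotient to $\pi$.

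The main obstacle is the quantum determinant verification in the forward construction. Since the quantum determinant is a noncommutative ordered product, one must carefully justify pulling $D^{-1}$ out of each summand via centrality, and then reindex the remaining sum through $\rho$ to recognize it as $D$. The dual pull-out in the quotient---establishing that $[u_{ii}]$ equals the inverse of the submatrix determinant---is the analog step for the injectivity argument. The unitarity and anticommutation checks are routine but best organized by case analysis on how many of the four relevant indices coincide with $i$.
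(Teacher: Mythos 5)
Your proposal is correct and follows essentially the same route as the paper: the paper defines $\pi_i$ by exactly your re-indexing formula (sending $u_{ii}$ to $D^{-1}$ and $u_{jk}$, $j,k\neq i$, to the appropriately shifted generator), notes it kills the ideal $I$, and identifies $I=\Ker(\pi_i)$ by constructing an inverse to the induced map $\overline{\pi_i}$. The only difference is that the paper leaves the relation-checking and the construction of the inverse as routine verifications, whereas you spell them out (correctly, including the determinant pull-out via centrality of $D$).
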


\begin{proof}
 It follows from the definitions that there exists a Hopf $*$-algebra map $\pi_i$  such that $\pi_i(u_{ki})=0=\pi_i(u_{ik})$ for $k \not=i$, $\pi_i(u_{ii})=D^{-1}$, $\pi_i(u_{jk})=u_{jk}$ for $j,k<i$, $\pi_i(u_{jk}) =u_{j,k-1}$ for $j<i$ and $k>i$, $\pi_i(u_{jk})=u_{j-1,k}$ for $j>i$ and $k<i$, $\pi_i(u_{jk})=u_{j-1,k-1}$ for $j,k>i$. By definition $\pi_i$ vanishes on $I$, the $*$-ideal generated by the elements in the statement of the lemma, so induces a surjective $*$-algebra map $\overline{\pi_i} : \rep(SU_{-1}(n+1))/I\rightarrow \rep(U_{-1}(n))$, and it is not difficult to construct an inverse isomorphism to $\overline{\pi_i}$, and hence $I = {\rm Ker}(\pi_i)$. The last assertion is an immediate consequence of the first one.
\end{proof}

\subsection{Representations of $\cs^*$-algebras}
In this short subsection, we collect a few useful facts on representations of $*$-algebras and $\cs^*$-algebras. 
If $A$ is $*$-algebra, a representation of $A$ always means a Hilbert space representation of $A$, i.e. a $*$-algebra map $A \rightarrow \mathcal B(H)$ into the $*$-algebra of bounded operators on a Hilbert space $H$. As usual, the set of isomorphism classes of irreducible representations of $A$ is denoted by $\widehat{A}$. If $\rho, \pi$ are representations of $A$, we write $ \rho \prec \pi$ if $\rho$ is isomorphic to a sub-representation of $\pi$. 

The following classical result will be a key tool. See e.g. \cite{dix} for a proof.

\begin{theorem}\label{extend}
 Let $A \subset B$ be an inclusion of  $\cs^*$-algebras, and let $\rho$ be an irreducible representation of $A$. Then there exists an irreducible representation $\pi$ of $B$ such that $\rho \prec \pi_{|A}$.
\end{theorem}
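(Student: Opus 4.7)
The plan is to reduce the statement to the GNS construction and to the standard fact that pure states of a $\cs^*$-subalgebra extend to pure states of the ambient $\cs^*$-algebra.

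First, I would pass from representations to states. Since $\rho$ is irreducible, I pick any unit vector $\xi$ in its Hilbert space and let $\varphi(a) = \langle \rho(a)\xi, \xi \rangle$. Then $\varphi$ is a pure state on $A$, and the GNS representation $\pi_\varphi$ is unitarily equivalent to the cyclic subrepresentation of $\rho$ generated by $\xi$, which by irreducibility is all of $\rho$. Thus it suffices to produce an irreducible representation $\pi$ of $B$ whose restriction to $A$ admits a cyclic vector with state $\varphi$.

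Next, I would extend $\varphi$ to a pure state on $B$. By Hahn-Banach (applied to the positive linear functional $\varphi$ of norm one, extended preserving norm, then checked to be positive and unital), the set
\[
S_\varphi = \{\psi \in S(B) : \psi|_A = \varphi\}
\]
is a nonempty convex weak-$*$ closed subset of the state space of $B$, hence weak-$*$ compact. By Krein-Milman, $S_\varphi$ has an extreme point $\psi$. The key point is that such a $\psi$ is in fact a pure state of $B$: if $\psi = t\psi_1 + (1-t)\psi_2$ with $\psi_i \in S(B)$ and $t \in (0,1)$, then restricting to $A$ gives $\varphi = t\,\psi_1|_A + (1-t)\,\psi_2|_A$, and purity of $\varphi$ forces $\psi_i|_A = \varphi$, so $\psi_i \in S_\varphi$; extremality in $S_\varphi$ then yields $\psi_1 = \psi_2 = \psi$. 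This is the step that I expect to be the main technical point, though it is entirely standard.

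Finally, let $\pi = \pi_\psi : B \to \mathcal{B}(H_\psi)$ be the GNS representation of $B$ with cyclic unit vector $\xi_\psi$. Since $\psi$ is pure, $\pi$ is irreducible. Consider the restriction $\pi|_A$ and the closed subspace $H' = \overline{\pi(A)\xi_\psi} \subset H_\psi$, which is $\pi(A)$-invariant. The cyclic representation of $A$ on $H'$ with cyclic vector $\xi_\psi$ has state $a \mapsto \langle \pi(a)\xi_\psi, \xi_\psi\rangle = \psi(a) = \varphi(a)$, so by uniqueness of the GNS construction it is unitarily equivalent to $\pi_\varphi \simeq \rho$. Hence $\rho \prec \pi|_A$, and $\pi$ is the required irreducible representation of $B$.
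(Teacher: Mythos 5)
Your proof is correct, and it is exactly the standard pure-state-extension argument (Hahn--Banach plus Krein--Milman on the fibre of state extensions, then GNS) that the paper itself does not reproduce but delegates to Dixmier's book, where this is the proof given. The only point worth flagging is that your positivity-of-the-extension and restriction-of-states steps implicitly use that $A$ contains the unit of $B$, which is harmless here since the paper assumes all algebras unital and applies the theorem only to unital inclusions.
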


Let $A$ be a $*$-algebra. If $\rho : A \rightarrow \mathcal B(H)$ is a finite-dimensional representation, then the character of $\rho$ is the linear map $\chi = {\rm tr} \rho$, where ${\rm tr}$ is the usual trace. Two finite-dimensional representations of $A$ are isomorphic if and only if they have the same character. 

\medskip

Now assume that $A$ is a Hopf $*$-algebra. The trivial representation is $\varepsilon$, the counit of $A$. Let $\rho : A \rightarrow \mathcal B(H)$ be a finite-dimensional representation of $A$. Recall that the dual representation $\rho^\vee: A \rightarrow \mathcal B(\overline{H})$ (where $\overline{H}$ is the conjugate Hilbert space of $H$) is defined by $\rho^\vee(a)(\overline{x})=\overline{\rho (S(a^*))(x)}$, for any $a\in A$ and $x \in H$.  We have $\varepsilon \prec \rho\otimes \rho^\vee$, and when $\rho$ is irreducible, this property characterizes $\rho^\vee$ up to isomorphism.

\section{2-cocycle deformations}

We now recall the usual twisting (2-cocycle deformation) construction for Hopf algebras, which is dual to the theory initiated by Drinfeld, and developed by Doi \cite{do}. We also develop the representation theoretic machinery needed to study the quotients of a twisting of a Hopf algebra of representative functions on a compact group.

Let $Q$ be a Hopf $*$-algebra. We use Sweedler's notation
$\Delta(x) = x_{1} \otimes x_{2}$. Recall (see e.g. \cite{do})
that a \textbf{unitary 2-cocycle} on $Q$ is a convolution invertible linear map
$\sigma : Q \otimes Q \longrightarrow \C$ satisfying
$$\sigma(x_{1},y_{1}) \sigma(x_{2}y_{2},z) =
\sigma(y_{1},z_{1}) \sigma(x,y_{2} z_{2})$$
$$\sigma^{-1}(x,y)=\overline{\sigma(S(x)^*,S(y)^*)}$$
and $\sigma(x,1) = \sigma(1,x) = \varepsilon(x)$, for $x,y,z \in Q$.

Following \cite{do} and \cite{sc1}, we associate various $*$-algebras to
a unitary 2-cocycle. 

$\bullet$ First consider the $*$-algebra 
$_{\sigma} \! Q$. As a vector space we have $_{\sigma} \! Q = Q$ and the product and involution 
of $_{\sigma}Q$ are defined to be
$$\{x\}  \{y\} = \sigma(x_{1}, y_{1}) \{x_{2} y_{2}\}, 
 \quad
\{x\}^*=\sigma^{-1}(x_2^*, S(x_1)^*)\{x_3^*\}, \quad x,y \in Q,$$
where an element $x \in Q$ is denoted $\{x\}$, when viewed as an element 
of $_{\sigma} \!Q$.

$\bullet$ We also have the $*$-algebra $Q_{\sigma^{-1}}$. As a vector space we have
$Q_{\sigma^{-1}} = Q$ and the product and involution of  
$Q_{\sigma^{-1}}$ are defined to be
$$\langle x \rangle \langle y \rangle = \sigma^{-1}(x_{2}, y_{2}) \langle x_{1} y_{1} \rangle, \quad 
\langle x\rangle^*=\sigma(S(x_3)^*, x_2^*)\langle x_1^*\rangle,
\quad x,y \in Q.$$
where an element $x \in Q$ is denoted $\langle x \rangle$, when viewed as an element 
of $Q_{\sigma^{-1}}$. The unitary cocycle condition ensures that $_{\sigma} \! Q$
and $Q_{\sigma^{-1}}$ are associative $*$-algebras with $1$ as a unit. The algebras $_{\sigma} \! Q$
and $Q_{\sigma^{-1}}$ are in fact anti-isomorphic, see e.g. \cite{bi03}.

If $Q$ is a compact Hopf algebra, then the Haar integral on $Q$, viewed as a linear map on  $_{\sigma}Q$ and $Q_{\sigma^{-1}}$, is still a faithful state (this can been seen by using the orthogonality relations \cite{wo, ks}).
We denote by $\cs^*_r(_{\sigma}Q)$ and $\cs_r^*(Q_{\sigma^{-1}})$ the respective $\cs^*$-completions obtained from the GNS-constructions associated to the Haar integral.

$\bullet$ Finally we have the 
Hopf $*$-algebra $Q^{\sigma} = {_{\sigma} \! Q}\!_{\sigma^{-1}}$.  
As a coalgebra $Q^{\sigma} 
= Q$. The product and involution of $Q^{\sigma}$ are defined to be
$$[x] [y]= \sigma(x_{1}, y_{1})
\sigma^{-1}(x_{3}, y_{3}) [x_{2} y_{2}], \quad [x]^*=\sigma(S(x_5)^*, x_4^*) \sigma^{-1}(x_2^*, S(x_1)^*) [x_3^*]
\quad x,y \in Q,$$
where an element $x \in Q$ is denoted $[x]$, when viewed as an element 
of $Q^{\sigma}$, 
and we have the following formula for the antipode of 
$Q^{\sigma}$:
$$S^{\sigma}([x]) = \sigma(x_{1}, S(x_{2}))
\sigma^{-1}(S(x_{4}), x_{5})[ S(x_{3})].$$
The Hopf algebras $Q$ and $Q^{\sigma}$ have equivalent tensor categories of comodules
\cite{sc1}. If $Q$ is a compact Hopf algebra, then $Q^{\sigma}$ is also a compact Hopf algebra, the Haar integral on $Q^\sigma$ being the one of $Q$, and the $\cs^*$-tensor categories of unitary comodules over $Q$ and $Q^\sigma$ are equivalent \cite{bdv}. If $Q= \rep(G)$, the algebra of representative functions on a compact group $G$, we denote by $C(G)^\sigma$ the enveloping $\cs^*$-algebra of $\rep(G)^\sigma$. 

\medskip

Very often unitary 2-cocycles are induced by simpler quotient Hopf $*$-algebras (quantum subgroups).
More precisely let $\pi : Q \to L$ be a Hopf $*$-algebra surjection and let
$\sigma : L \otimes L \to \C$ be a unitary 2-cocycle on $L$.
Then $\sigma_{\pi} = \sigma \circ (\pi \otimes \pi) : Q \otimes Q \to \C$ is a unitary 2-cocycle.
In what follows the cocycle $\sigma_\pi$ will simply be denoted by $\sigma$, this should cause not cause any confusion.

We first record the following elementary result from \cite{bb09}.

\begin{proposition}\label{corespsubgroup}
Let $\pi : Q \to L$ be a Hopf $*$-algebra surjection and let 
 $\sigma : L \otimes L \to \C$ be a unitary $2$-cocycle.  Denote by $[\pi]:Q^\sigma \to L^\sigma$ the map $[x]\mapsto[\pi(x)]$. 
 Then there is a bijection between the following data.
\begin{enumerate}
 \item Surjective Hopf $*$-algebra maps $ f : Q \to R$ such that there exists
 a Hopf $*$-algebra map $g : R \to L$ satisfying $g \circ f = \pi$.
 \item Surjective Hopf $*$-algebra maps $ f' : Q^{\sigma} \to R'$ such that there exists
 a Hopf $*$-algebra map $g' : R' \to L^{\sigma}$ satisfying $g' \circ f' = [\pi]$.
\end{enumerate}
\end{proposition}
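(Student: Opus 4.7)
The plan is to construct the bijection explicitly, leveraging the fact that twisting is reversible: $(Q^\sigma)^{\sigma^{-1}} \cong Q$, where $\sigma^{-1}$ is viewed as a unitary $2$-cocycle on $L^\sigma$ and then pulled back to $Q^\sigma$ along $[\pi]$.

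First, starting from data of type (1)---a surjection $f : Q \to R$ together with $g : R \to L$ satisfying $g \circ f = \pi$---I would pull $\sigma$ back along $g$ to obtain a unitary $2$-cocycle $\sigma_g := \sigma \circ (g \otimes g)$ on $R$. The key identity
$$\sigma_g \circ (f \otimes f) = \sigma \circ (\pi \otimes \pi) = \sigma_\pi,$$
valid because $g \circ f = \pi$, allows one to define $[f] : Q^\sigma \to R^{\sigma_g}$ by $[x] \mapsto [f(x)]$ and $[g] : R^{\sigma_g} \to L^\sigma$ by $[y] \mapsto [g(y)]$. Both are coalgebra maps because coalgebra structures are unaffected by twisting and $f,g$ are coalgebra maps; they respect the twisted multiplications because the three cocycles match under pullback; and they respect the twisted involutions because $f, g, \pi$ are $*$-maps intertwining the antipodes, so the cocycle factors appearing in the formula for $[x]^*$ transport correctly. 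Surjectivity of $[f]$ is inherited from $f$ (same underlying linear map), and $[g] \circ [f] = [\pi]$ is immediate. Setting $R' := R^{\sigma_g}$, $f' := [f]$, $g' := [g]$ then produces data of type (2).

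For the inverse assignment, I would apply the identical construction to $(Q^\sigma, L^\sigma, [\pi])$ equipped with the unitary $2$-cocycle $\sigma^{-1}$ on $L^\sigma$: a diagram of type (2) then produces $(Q^\sigma)^{\sigma^{-1}} \to (R')^{\sigma^{-1}_{g'}} \to (L^\sigma)^{\sigma^{-1}}$, and the canonical isomorphisms $(Q^\sigma)^{\sigma^{-1}} \cong Q$ and $(L^\sigma)^{\sigma^{-1}} \cong L$ (under which the twist of $[\pi]$ by $\sigma^{-1}$ is identified with $\pi$) recover data of type (1).

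The main obstacle is checking that these two constructions are mutually inverse, which ultimately reduces to the fact that iterated twisting of objects and of morphisms returns the identity. Concretely, one observes that $(\sigma_g)^{-1}$ and $\sigma^{-1} \circ (g \otimes g)$ coincide as cocycles on $R$---since pullback along the coalgebra map $g$ is an algebra map with respect to convolution---so that $(R^{\sigma_g})^{(\sigma_g)^{-1}}$ canonically equals $R$, and the iterated twist of $[f]$ collapses back to $f$ under these identifications. The only other genuinely computational point is verifying that $[f]$ and $[g]$ preserve the involutions; this is a direct Sweedler-notation calculation using $\pi(x^*) = \pi(x)^*$ and $\pi(S(x)) = S(\pi(x))$ together with the analogous identities for $f$ and $g$.
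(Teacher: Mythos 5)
Your proposal is correct and follows essentially the same route as the source: the paper states this proposition without proof, citing \cite{bb09}, and the argument there is precisely the one you give, namely pulling $\sigma$ back along $g$ to a cocycle $\sigma_g=\sigma\circ(g\otimes g)$ on $R$, twisting the whole diagram, and inverting via the cocycle $\sigma^{-1}$ on $L^\sigma$ together with the canonical identifications $(Q^\sigma)^{\sigma^{-1}}\simeq Q$ and $\sigma_g^{-1}=\sigma^{-1}\circ(g\otimes g)$. All the verifications you flag (multiplicativity, compatibility with the twisted involutions, and the collapse of the iterated twist) are routine Sweedler-notation computations exactly as you describe.
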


Similarly, the following result is essentially contained in \cite{bb09}. 

\begin{proposition}\label{gene}
Let $\pi : Q \to L$ be a Hopf $*$-algebra surjection and let
$\sigma : L \otimes L \to \C$ be a unitary $2$-cocycle on $L$. We have an injective
$*$-algebra map
\begin{align*}
\theta : Q^{\sigma} & \longrightarrow   Q \otimes {_\sigma \! L} \otimes L_{\sigma^{-1}} \\
[x] & \longmapsto x_{2} \otimes \{\pi(x_1)\}  \otimes \langle \pi(x_3) \rangle 
\end{align*}
that induces an isomorphism to the subalgebra of coinvariant elements
$$Q^{\sigma} \simeq   (Q \otimes {_\sigma \! L} \otimes L_{\sigma^{-1}})^{{\rm co}(L^{\rm cop} \otimes L)}$$
where the respective right coactions of $L^{\rm cop} \otimes L$ on $Q$ and ${_\sigma \! L} \otimes L_{\sigma^{-1}}$ are defined by
\begin{align*}
 Q & \rightarrow Q \otimes L^{\rm cop} \otimes L \quad \quad \quad \quad \quad {_\sigma \! L} \otimes L_{\sigma^{-1}} \rightarrow {_\sigma \! L} \otimes L_{\sigma^{-1}}  \otimes L^{\rm cop} \otimes L \\
x & \mapsto x_2 \otimes \pi(x_1) \otimes \pi(x_3) \quad \quad \{\pi(x)\} \otimes \langle \pi(y) \rangle  \mapsto \{\pi(x_1)\} \otimes \langle \pi(y_2)\rangle \otimes S^{-1}\pi(x_2) \otimes S\pi(y_1)
\end{align*}
If moreover $Q$ and $L$ are cosemisimple and $h_Q$ and $h_L$ denote their respective Haar integrals, we have $(h_Q \otimes h_L \otimes h_L)\theta =h_Q$. 
\end{proposition}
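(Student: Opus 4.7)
The plan is to verify that $\theta$ is a $*$-algebra morphism by direct computation, prove injectivity via a left inverse built from the counit, check coinvariance using the antipode identities together with coassociativity, deduce surjectivity onto the coinvariants from the coinvariance equation, and finally read off the Haar identity from iterated left/right invariance of $h_Q$.

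I would first verify that $\theta$ is a well-defined $*$-algebra morphism by direct computation. Writing both $\theta([x])\theta([y])$ and $\theta([x][y])$ out in a common $5$-fold Sweedler expansion of $x$ and $y$, and using multiplicativity of $\pi$ together with the convention $\sigma_\pi = \sigma \circ (\pi \otimes \pi)$ for the induced cocycle on $Q$, both sides become identical; no cocycle identity is needed at this stage. The involution is checked analogously using the $*$-formulas $\{a\}^* = \sigma^{-1}(a_2^*, S(a_1)^*)\{a_3^*\}$ and $\langle a\rangle^* = \sigma(S(a_3)^*, a_2^*)\langle a_1^*\rangle$, after applying coassociativity. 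Injectivity is then immediate from the identity $(\id_Q \otimes \varepsilon \otimes \varepsilon) \circ \theta = \id_Q$, a consequence of $x_{(2)}\varepsilon(x_{(1)})\varepsilon(x_{(3)}) = x$; denote this left inverse by $F$.

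To show that $\theta$ lands in the coinvariants, I compute $\delta(\theta([x]))$. Combining the $L^{\rm cop} \otimes L$-valued coefficients coming from the two component coactions, and using that $L^{\rm cop}$ has the same algebra structure as $L$, the total coefficient is
\[
\pi(x_{(2,1)})\, S^{-1}\pi(x_{(1,2)}) \otimes \pi(x_{(2,3)})\, S\pi(x_{(3,1)}) \in L \otimes L.
\]
By coassociativity one may reindex the iterated coproduct of $x$ so that each pair of consecutive Sweedler subscripts above arises as a single $\Delta$-split; the antipode identities $y_2 S^{-1}(y_1) = \varepsilon(y)\,1_L = y_1 S(y_2)$ then collapse each tensor factor to $1_L$, and the leftover counits absorb into the remaining iterated coproduct to leave exactly $\theta([x]) \otimes 1 \otimes 1$.

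For surjectivity onto the coinvariants, given $v = \sum_i x_i \otimes \{a_i\} \otimes \langle b_i\rangle$ with $\delta(v) = v\otimes 1\otimes 1$, the claim is $v = \theta(F(v))$. Writing out the coinvariance equation gives
\[
\sum_i x_{i,(2)} \otimes \{a_{i,(1)}\} \otimes \langle b_{i,(2)}\rangle \otimes \pi(x_{i,(1)}) S^{-1}(a_{i,(2)}) \otimes \pi(x_{i,(3)}) S(b_{i,(1)}) = \sum_i x_i \otimes \{a_i\} \otimes \langle b_i\rangle \otimes 1 \otimes 1,
\]
which forces each $a_i$ and $b_i$ to be, essentially, a coproduct piece of $\pi(x_i)$. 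Multiplying the $L^{\rm cop}$ and $L$ factors on the left of this identity back into the $_\sigma L$ and $L_{\sigma^{-1}}$ components via the natural module structures, and then unwinding with the antipode identities as in the coinvariance check, rewrites the left side as $\theta(F(v))$. I expect this bookkeeping, together with the coassociativity reindexings needed to put consecutive Sweedler components into $\Delta$-related pairs, to be the main technical obstacle of the proof; conceptually it is the statement that the extension is cleft in the Hopf--Galois sense, with $\theta$ playing the role of cleaving map.

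Finally, the Haar identity follows from the left-invariance of $h_Q$ applied with the functional $\psi = h_L \circ \pi$ (which satisfies $\psi(1) = 1$ since $\pi$ preserves the unit and $h_L$ is a state): this gives $(h_L\pi \otimes h_Q)\Delta = h_Q$, and analogously on the right. Iterating on the three-fold coproduct yields $h_L(\pi(x_{(1)}))\, h_Q(x_{(2)})\, h_L(\pi(x_{(3)})) = h_Q(x)$. Since the Haar functionals on $Q^\sigma$, $_\sigma L$, and $L_{\sigma^{-1}}$ coincide as linear maps with those of $Q$ and $L$ respectively (via the orthogonality relations referred to earlier), this is precisely $(h_Q \otimes h_L \otimes h_L) \circ \theta = h_Q$.
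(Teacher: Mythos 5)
Your proposal is correct and follows essentially the same route as the paper's (very terse) proof: direct verification that $\theta$ is a $*$-algebra map, injectivity and the inverse on coinvariants both given by $\id_Q\otimes\varepsilon\otimes\varepsilon$, and the Haar identity from left/right invariance. The only loose point is the phrase ``natural module structures'' in the surjectivity step --- the correct mechanism there is convolution-invertibility of the cleaving maps $c\mapsto\{c\}$ and $d\mapsto\langle d\rangle$, which you in fact identify yourself via the cleft-extension picture --- and the paper offers no more detail on that step than you do.
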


\begin{proof}
It follows from the definitions that $\theta$ is a $*$-algebra map and that $({\rm id}_Q \otimes \varepsilon \otimes \varepsilon)\theta = {\rm id}_{Q^\sigma}$, hence $\theta$ is injective. It is a direct verification to check that  
 $\theta(Q^\sigma) \subset (Q \otimes {_\sigma \! L} \otimes L_{\sigma^{-1}})^{{\rm co}(L^{\rm cop} \otimes L)}$, and that $\theta$ induces the announced isomorphism, with inverse $ ({\rm id}_Q \otimes \varepsilon \otimes \varepsilon)$. The last assertion is immediate.
\end{proof}

We now specialize to the case $Q=\rep(G)$, the algebra of representative on a classical compact group $G$.

\begin{proposition}\label{fixcocycle}
 Let $G$ be a compact group, let $\Gamma \subset G$ be a closed subgroup and let $\sigma$ be a unitary $2$-cocycle on $\mathcal R(\Gamma)$. Put $B=  \cs^*_r(_{\sigma}\rep(\Gamma)) \otimes \cs_r^*(\rep(\Gamma)_{\sigma^{-1}})$.  Then there exists a $\cs^*$-algebra embedding
$$\theta : C(G)^\sigma \longrightarrow C(G) \otimes B$$
inducing a $\cs^*$-algebra isomorphism 
$$C(G)^\sigma \simeq (C(G) \otimes B)^{\Gamma^{\rm op} \times \Gamma}$$
for some natural actions of $\Gamma^{\rm op} \times \Gamma$ on $G$ and $B$.
\end{proposition}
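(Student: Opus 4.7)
The plan is to promote the algebraic isomorphism of Proposition~\ref{gene} to the $\cs^*$-level using the Haar integral. Specialising Proposition~\ref{gene} to $Q=\rep(G)$, $L=\rep(\Gamma)$ with $\pi:\rep(G)\to\rep(\Gamma)$ the restriction map yields an injective $*$-algebra map
$$\theta_0 : \rep(G)^\sigma \longrightarrow \rep(G) \otimes {_\sigma \rep(\Gamma)}\otimes \rep(\Gamma)_{\sigma^{-1}},$$
satisfying $(h_G\otimes h_\Gamma\otimes h_\Gamma)\circ\theta_0 = h_G$, whose image is the algebraic coinvariant subalgebra for the $L^{\rm cop}\otimes L$-coactions described there.

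Composing $\theta_0$ with the canonical inclusion of the right-hand side into $C(G)\otimes B$ gives a $*$-algebra map into a $\cs^*$-algebra; by the universal property of the enveloping $\cs^*$-algebra, this extends to a $*$-homomorphism $\theta:C(G)^\sigma\to C(G)\otimes B$. For injectivity, I would invoke co-amenability: since $\rep(G)$ is commutative it is co-amenable, and because co-amenability is an invariant of the monoidal $\cs^*$-category of unitary comodules \cite{bdv}, it is inherited by the twist $\rep(G)^\sigma$. Hence the Haar state on $C(G)^\sigma$ is faithful, and given $x\in C(G)^\sigma$ with $\theta(x)=0$, approximating $x$ by elements $x_n\in\rep(G)^\sigma$ and applying the continuous state $h_G\otimes h_\Gamma\otimes h_\Gamma$ to $\theta_0(x_n)^*\theta_0(x_n)$ yields $h_G(x^*x)=0$, forcing $x=0$.

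To identify the image with the fixed-point $\cs^*$-subalgebra, I would transfer the coactions of Proposition~\ref{gene} to classical group actions. Via Peter--Weyl, the coaction $x\mapsto x_2\otimes\pi(x_1)\otimes\pi(x_3)$ on $\rep(G)$ corresponds to a continuous action of $\Gamma^{\rm op}\times\Gamma$ on $C(G)$ by double translation $(\gamma_1,\gamma_2)\cdot f(g)=f(\gamma_1 g\gamma_2)$; the coaction on ${_\sigma\rep(\Gamma)}\otimes\rep(\Gamma)_{\sigma^{-1}}$, being locally finite, analogously extends to a strongly continuous action on $B$. Integration against normalised Haar measure on $\Gamma^{\rm op}\times\Gamma$ provides a norm-continuous conditional expectation $E:C(G)\otimes B\to(C(G)\otimes B)^{\Gamma^{\rm op}\times\Gamma}$, whose restriction to the dense $*$-subalgebra $\rep(G)\otimes{_\sigma\rep(\Gamma)}\otimes\rep(\Gamma)_{\sigma^{-1}}$ has image exactly $\theta_0(\rep(G)^\sigma)$ by Proposition~\ref{gene}. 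Consequently $\theta(C(G)^\sigma)$, as the norm closure of $\theta_0(\rep(G)^\sigma)$, coincides with $(C(G)\otimes B)^{\Gamma^{\rm op}\times\Gamma}$.

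The main obstacle is the co-amenability transfer: one must confirm that $\rep(G)^\sigma$ inherits co-amenability from $\rep(G)$ through the $2$-cocycle twist, so that the Haar state on $C(G)^\sigma$ remains faithful and the extension $\theta$ is injective. The rest is a standard compact-group-action completion procedure applied to the algebraic content of Proposition~\ref{gene}.
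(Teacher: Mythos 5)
Your proposal follows essentially the same route as the paper: specialise Proposition \ref{gene} to the restriction map $\rep(G)\to\rep(\Gamma)$, extend $\theta$ to the $\cs^*$-level via the universal property, use faithfulness of the Haar state on $C(G)^\sigma$ together with the identity $(h_G\otimes h_\Gamma\otimes h_\Gamma)\circ\theta_0=h_G$ to get injectivity, and identify the image with the fixed-point algebra by an averaging/density argument. Your conditional-expectation formulation of the last step is just a slightly more explicit version of the paper's density argument and is fine.

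There is, however, one incorrect justification, precisely at the point you yourself flag as the main obstacle: co-amenability is \emph{not} an invariant of the monoidal $\cs^*$-category of unitary comodules. The reference \cite{bdv} you invoke for this is in fact a source of counterexamples: it produces compact quantum groups monoidally equivalent to the co-amenable $SU_q(2)$ that fail to be co-amenable (free orthogonal quantum groups $A_o(F)$ with $F$ of size at least $3$). What saves the argument for a $2$-cocycle twist is that the underlying coalgebra of $\rep(G)^\sigma$ equals that of $\rep(G)$, so both the fusion ring \emph{and the classical dimension function} are unchanged; co-amenability is equivalent to a Kesten-type amenability condition on the fusion ring equipped with the classical dimensions, whence $\rep(G)^\sigma$ is co-amenable because $\rep(G)$ is. This is the content of \cite{ba99}, which is the reference the paper actually uses at this step. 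With that repair your argument goes through and coincides with the paper's proof.
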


\begin{proof}
The restriction map $\mathcal R(G) \rightarrow \mathcal R(\Gamma)$ enables us to use the previous proposition.
The previous injective $*$-algebra map $\theta : \mathcal R(G)^{\sigma} \rightarrow   \rep(G) \otimes {_\sigma \! \rep(\Gamma)} \otimes \rep(\Gamma)_{\sigma^{-1}}$ induces a  $*$-algebra map $C(G)^\sigma \longrightarrow C(G) \otimes B$, still denoted $\theta$ (recall that $C(G)^\sigma$ is the enveloping $\cs^*$-algebra of $\rep(G)^\sigma$). The co-amenability of $\mathcal R(G)^\sigma$ \cite{ba99} and the last observation in the previous proposition show that $\theta$ is injective at the $\cs^*$-algebra level. The coactions of the previous proposition induce actions of $\Gamma^{\rm op} \times \Gamma$ on $\mathcal R(G)$ and on ${_\sigma \! \rep(\Gamma)} \otimes \rep(\Gamma)_{\sigma^{-1}}$, and hence on $C(G)$ and on $B$. We have, by the previous proposition,  an isomorphism $\rep(G)^{\sigma} \simeq (\rep(G) \otimes {_\sigma \! \rep(\Gamma)} \otimes \rep(\Gamma)_{\sigma^{-1}})^{\Gamma^{\rm op} \times \Gamma}$, and hence, since $(\rep(G) \otimes {_\sigma \! \rep(\Gamma)} \otimes \rep(\Gamma)_{\sigma^{-1}})^{\Gamma^{\rm op} \times \Gamma}$ is dense in $(C(G) \otimes B)^{\Gamma^{\rm op} \times \Gamma}$, an isomorphism
$$C(G)^\sigma \simeq (C(G) \otimes B)^{\Gamma^{\rm op} \times \Gamma}$$
This gives the announced result.
\end{proof}

\begin{remark}{\rm 
The right action of $\Gamma^{\rm op} \times \Gamma$ on $G$ in the previous result
is given by
\begin{align*}
 G \times (\Gamma^{\rm op} \times \Gamma) &\longrightarrow G \\
(g, (r,s)) & \longmapsto rgs
\end{align*}
The $C^*$-algebra $(C(G) \otimes B)^{\Gamma^{\rm op} \times \Gamma}$ is naturally identified with
$C(G \times_{\Gamma^{\rm op} \times \Gamma} B)$, the algebra of continuous functions $f : G \rightarrow B$
such that $f(g.(r,s))=(r,s)^{-1}.f(g)$, $\forall g  \in G$, $\forall (r,s) \in \Gamma^{\rm op} \times \Gamma$.
Thus it follows that $C(G)^\sigma$ is (the algebra of sections on) a continuous bundle of $\cs^*$-algebras over the orbit space $G/(\Gamma^{\rm op} \times \Gamma) \simeq \Gamma \setminus G/ \Gamma$, with fiber at an orbit $\Gamma g\Gamma$ the fixed point algebra $B^{(\Gamma^{\rm op} \times \Gamma)_g}$, where $(\Gamma^{\rm op}\times\Gamma)_g= \{(r,s) \in \Gamma \times \Gamma, \ rgs=g\}$: see e.g. Lemma 2.2 in \cite{ee}. Hence the representation theory of $C(G)^{\sigma}$ is determined by the representation theory of the fibres $B^{(\Gamma^{\rm op} \times \Gamma)_g}$.
}
\end{remark}

The following result will be our main tool to study the representations and quotients of a Woronowicz algebra of type $C(G)^\sigma$.

\begin{proposition}\label{maintool}
 Let $G$ be a compact group, let $\Gamma \subset G$ be a closed subgroup and let $\sigma$ be a unitary $2$-cocycle on $\mathcal R(\Gamma)$. Then for each $g \in G$ we have a $*$-algebra map 
\begin{align*}
 \theta_g :  C(G)^{\sigma} & \longrightarrow  \cs_r^*({_\sigma \! \mathcal R(\Gamma})) \otimes \cs_r^*(\mathcal R(\Gamma)_{\sigma^{-1}}) \\
\mathcal R(G)^{\sigma} \ni [f] & \longmapsto f_{2}(g)  \{ {f_1}_{|\Gamma}\}  \otimes \langle {f_3}_{|\Gamma} \rangle \in {_\sigma \! \mathcal R(\Gamma}) \otimes \mathcal R(\Gamma)_{\sigma^{-1}}
\end{align*}
If $\Gamma$ is finite, then  $\dim(\Im(\theta_g))= |\Gamma g \Gamma|$.
 
Assume moreover that
${_\sigma \! \mathcal R(\Gamma})$ and $\mathcal R(\Gamma)_{\sigma^{-1}}$ are full matrix algebras, so that $\theta_g$ defines a representation of dimension $|\Gamma|$ of   $\mathcal R(G)^{\sigma}$. 
\begin{enumerate}
\item Every irreducible representation of $C(G)^\sigma$ is isomorphic to a  subrepresentation of $\theta_g$ for some $g \in G$. In particular every irreducible representation of $C(G)^\sigma$ is finite-dimensional and has dimension at most $|\Gamma|$.
\item  The representation $\theta_g$ is irreducible if and only if $|\Gamma g \Gamma|= |\Gamma|^2$, if and only if $\#\{(s,t) \in \Gamma \times \Gamma \ | \ sgt=g\}=1$. Any irreducible representation of dimension $| \Gamma|$ of $C(G)^{\sigma}$ is isomorphic to an irreducible  representation $\theta_g$ as above.
\item For $g,h \in G$, we have $\theta_g \simeq \theta_h \iff \Gamma g \Gamma = \Gamma h \Gamma$.
\item For $g,h \in G$, we have $\theta_g \otimes \theta_h \simeq \oplus_{s \in \Gamma} \theta_{gsh}$.
\item Assume furthermore that $\Gamma$ is abelian. Then each $s \in \Gamma$ defines a 1-dimensional representation $\varepsilon_s$ of $C(G)^\sigma$, and for $s \in \Gamma$, we have $\theta_s\simeq \oplus_{t\in \Gamma} \varepsilon_t$.  
\end{enumerate}
\end{proposition}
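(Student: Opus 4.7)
The plan is to realize $\theta_g$ as the composition $(\mathrm{ev}_g \otimes \mathrm{id}) \circ \theta$ of the embedding $\theta$ from Proposition \ref{gene} with evaluation at $g$ on the first factor. Since $\theta$ is a $*$-algebra map and $\mathrm{ev}_g$ is a $*$-character on $\mathcal R(G)$, this immediately shows that $\theta_g$ is a $*$-algebra map and extends to the universal $\cs^*$-algebra $C(G)^\sigma$. For the dimension assertion, stripping away the twists (which change only the products, not the underlying vector spaces), $\theta_g$ sends $f$ to the function $(s,t) \mapsto f(sgt)$ in $\mathcal R(\Gamma)\otimes \mathcal R(\Gamma)$; this function depends on $f$ only through its restriction to $\Gamma g \Gamma$, and the pullback along the surjection $\Gamma \times \Gamma \twoheadrightarrow \Gamma g \Gamma$, $(s,t)\mapsto sgt$, is injective, so $\dim \Im(\theta_g) = |\Gamma g \Gamma|$.

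Assume now the matrix algebra hypothesis, so $|\Gamma|=n^2$ and $B := {_\sigma \mathcal R(\Gamma)} \otimes \mathcal R(\Gamma)_{\sigma^{-1}} \cong M_{|\Gamma|}(\C)$. For (1), by Proposition \ref{fixcocycle} we have $C(G)^\sigma \hookrightarrow C(G)\otimes B$, and the only irreducible representations of $C(G)\otimes B$ are the $\mathrm{ev}_g \otimes \mathrm{id}_B$, whose restrictions to $C(G)^\sigma$ are precisely the $\theta_g$; Theorem \ref{extend} then gives the claim and bounds all irreducible dimensions by $|\Gamma|$. For (2), the double commutant theorem applied to the $*$-subalgebra $\Im(\theta_g) \subset M_{|\Gamma|}(\C)$ shows that $\theta_g$ is irreducible iff its image equals the full matrix algebra, iff $|\Gamma g\Gamma| = |\Gamma|^2$, which by orbit-stabilizer is equivalent to triviality of $\{(s,t) : sgt = g\}$; the last assertion of (2) is immediate from (1) since any $|\Gamma|$-dimensional irreducible must be a subrepresentation, hence equal, to some $\theta_g$.

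For (3), when $h = rgs$ an intertwiner $\theta_g \to \theta_h$ is obtained from conjugation by $\{1_r\} \otimes \langle 1_s\rangle$ in $B$, which implements the translation $g \mapsto rgs$ inside the twisted matrix algebras; conversely, a function in $\mathcal R(G)$ vanishing on one double coset but not the other shows that distinct cosets yield representations with distinct kernels. For (4), I would compute the character $\chi_{\theta_g}([f]) = \tfrac{1}{|\Gamma|}\sum_{s,t\in\Gamma} f(sgt)$ using the formula for $\theta_g$ and the trace on full matrix algebras, then use that the character of a tensor product of representations of a Hopf algebra is the convolution of characters; the change of variables $w=tu$ reduces the identity $\chi_{\theta_g \otimes \theta_h} = \sum_{s\in\Gamma} \chi_{\theta_{gsh}}$ to the elementary count $\#\{(t,u)\in\Gamma^2 : tu = w\} = |\Gamma|$. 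For (5), when $\Gamma$ is abelian, one verifies using the cocycle conditions that each evaluation $\mathrm{ev}_t : \mathcal R(G)^\sigma \to \C$ (for $t\in\Gamma$) is a character, defining $\varepsilon_t$; the decomposition $\theta_s \simeq \oplus_{t\in\Gamma}\varepsilon_t$ then follows from the identity $\chi_{\theta_s}([f]) = \sum_{t\in\Gamma} f(t) = \sum_t \varepsilon_t([f])$ obtained by specializing the trace formula above. I expect the main obstacle to be (3) and (5), where explicit manipulation of the cocycle is needed to produce intertwiners and verify the character property, respectively.
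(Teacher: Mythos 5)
Your proposal follows essentially the same route as the paper: $\theta_g=(\mathrm{ev}_g\otimes\mathrm{id}\otimes\mathrm{id})\circ\theta$, the double-coset count $\dim\Im(\theta_g)=|\Gamma g\Gamma|$ via $f\mapsto((s,t)\mapsto f(sgt))$, Theorem \ref{extend} for (1), surjectivity onto the matrix algebra for (2), and the character formula $\chi_{\theta_g}([f])=\frac{1}{|\Gamma|}\sum_{s,t\in\Gamma}f(sgt)$ for (4) and (5). The one deviation is the explicit intertwiner in (3): conjugation by $\{1_r\}\otimes\langle 1_s\rangle$ is not justified as stated (the correct statement is that left/right translation by elements of $\Gamma$ gives automorphisms of the twisted algebras, which are automatically inner since these are full matrix algebras), but this direction of (3) also follows immediately from the character formula you already derive, which is exactly how the paper argues, so nothing essential is lost.
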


\begin{proof}
The representations $\theta_g$ are defined using the previous embedding $\theta$, by $\theta_g= ({\rm ev}_g \otimes {\rm id} \otimes {\rm id})\theta$, where ${\rm ev}_g$ is the evaluation at $g$. 
We  assume now that $\Gamma$ is finite. As a linear space, we view $\cs_r^*({_\sigma \! \mathcal R(\Gamma})) \otimes \cs_r^*(\mathcal R(\Gamma)_{\sigma^{-1}})$ as $C(\Gamma \times \Gamma)$.
Consider the continuous linear map
\begin{align*}
 \theta'_g : C(G) &\longrightarrow C(\Gamma \times \Gamma) \\
f &\longmapsto ( (s,t) \mapsto f(sgt))
\end{align*}
For $f \in \mathcal R(G)$, we have $\theta'_g(f)=\theta_g([f])$, hence  $\theta'_g(\mathcal R(G))=\theta_g(\mathcal R(G)^\sigma)$ and  $\theta'_g(C(G))=\theta_g(C(G)^\sigma)$ by the density of $\mathcal R(G)$ and the finite-dimensionality of the target space.
We have ${\rm Ker (\theta'_g)} =\{f \in C(G) \ | \ f_{|\Gamma g\Gamma}=0\}=I$ and since $\theta'_g(C(G)) \simeq C(G)/I\simeq C(\Gamma g \Gamma)$, we have $\dim(\theta'_g(C(G))$ = $|\Gamma g \Gamma|=\dim(\theta_g(C(G)^\sigma))$. 

Assume now that
${_\sigma \! \mathcal R(\Gamma})$ and $\mathcal R(\Gamma)_{\sigma^{-1}}$ are full matrix algebras.  
By counting dimensions, ${_\sigma \! \mathcal R(\Gamma}) \otimes \mathcal R(\Gamma)_{\sigma^{-1}} \cong M_{|\Gamma|}(\C)$.
The irreducible representations  of $C(G) \otimes {_\sigma \! \mathcal R(\Gamma}) \otimes \mathcal R(\Gamma)_{\sigma^{-1}}$ all are of the form ${\rm ev}_g \otimes {\rm id} \otimes {\rm id}$, and since 
$\theta$ defines an embedding $C(G)^\sigma \hookrightarrow C(G) \otimes {_\sigma \! \mathcal R(\Gamma}) \otimes \mathcal R(\Gamma)_{\sigma^{-1}}$, it follows from Theorem \ref{extend} that any irreducible representation of $C(G)^\sigma$ is isomorphic to a  subrepresentation of some $\theta_g$, and hence is finite-dimensional of dimension $\leq |\Gamma|$. This proves (1). The matrix representation $\theta_g$ is irreducible if and only if $\theta_g$ is surjective, if and only if $|\Gamma g \Gamma|=\dim({\rm Im}(\theta_g))= |\Gamma|^2$, and this proves (2).

Consider now the linear map
\begin{align}
\label{eq:character}
 \chi'_g : C(G) & \longrightarrow \C \\
f & \mapsto \frac{1}{|\Gamma|} \sum_{s,t \in \Gamma}f(sgt) \nonumber
\end{align}
Let $\chi_g$ be the character of $\theta_g$. Let us check that $\chi_g([f])=\chi'_g(f)$ for any $f \in \mathcal R(G)$. By the density of $\mathcal R(G)$ and $\mathcal R(G)^\sigma$, this will show that for $g,h \in G$, we have $\chi_g=\chi_h \iff \chi'_g=\chi'_h$. 
 Consider the normalized Haar integral $h : C(\Gamma) \rightarrow \C$, $f \mapsto \frac{1}{|\Gamma|} \sum_{s \in \Gamma}f(s)$. Then $h$, viewed as a linear map on 
 ${_\sigma \! \mathcal R(\Gamma})$, is still a trace since it is invariant under the natural ergodic action of the finite group $\Gamma$ on the matrix algebra  ${_\sigma \! \mathcal R(\Gamma})$, and hence we have $h=\frac{1}{\sqrt{|\Gamma|}} {\rm tr}$, where ${\rm tr}$ is the usual trace. Thus we have, for $f \in \mathcal R(G)$,
\begin{align*}
 \chi_g([f])=({\rm tr} \otimes {\rm tr})\theta_g ([f]) &= |\Gamma|(h \otimes h)\theta_g([f])=  |\Gamma|(h\otimes h)( f_{2}(g)  \{f_{1_{|\Gamma}} \} \otimes \langle f_{3_{|\Gamma}} \rangle) \\
& = \frac{1}{|\Gamma|} \sum_{s,t \in \Gamma} f_1(s) f_2(g) f_3(t) = \frac{1}{|\Gamma|} \sum_{s,t \in \Gamma}
f(sgt) = \chi'_g(f)
\end{align*}

Let $g,h \in G$. If $\Gamma g \Gamma=\Gamma h \Gamma$, then $\chi'_g=\chi'_h$, and hence $\chi_g=\chi_h$, and it follows that $\theta_g \simeq \theta_h$.
Conversely, assume that $\Gamma g \Gamma\not=\Gamma h \Gamma$, and let $f \in C(G)$ be such that
$f_{|\Gamma g \Gamma}=0$ and $f_{|\Gamma h \Gamma}=1$. We have $\chi'_g(f)=0$ and $\chi'_h(f)=1$: this shows that $\chi_g \not=\chi_h$ and hence that $\theta_g$ and $\theta_h$ are not isomorphic.
This proves (3).

For $g,h \in G$, let us show that $(\chi_g \otimes \chi_h)\Delta = \sum_{s \in \Gamma} \chi_{gsh}$. This will prove (4). For $f$ in $\mathcal R(G)$, we have 
\begin{align*}(\chi_g \otimes \chi_h)\Delta([f]) &= \chi_g([f_1])\chi_h([f_2])=\frac{1}{|\Gamma|^2}\sum_{r,s,t,u \in \Gamma}f_1(rgs)f_2(tsu) \\
& = \frac{1}{|\Gamma|^2}\sum_{r,s,t,u \in \Gamma} f(rgsthu) =  \frac{1}{|\Gamma|}\sum_{r,s,u \in \Gamma} f(rgshu)
= \sum_{s \in \Gamma} \chi_{gsh}([f])
\end{align*}
and we have the result by density of $\mathcal R(G)^\sigma$ in $C(G)^\sigma$.

Assume finally that $\Gamma$ is abelian. Then $\rep(\Gamma)$ is cocommutative and $\rep(\Gamma)^\sigma=\rep(\Gamma)$. For $s \in \Gamma$, the $*$-algebra map $\varepsilon_s : \rep(G)^\sigma \rightarrow \C$ is obtained by composing the restriction $\mathcal R(G)^\sigma \rightarrow \rep(\Gamma)^\sigma=\rep(\Gamma)$ with the evaluation at $s$. For $s \in \Gamma$ and $f$ in $\rep(G)$, we have
$$\chi_s([f])=\frac{1}{|\Gamma|}\sum_{r,t \in \Gamma} f(rst) = \frac{1}{|\Gamma|}\sum_{r,t \in \Gamma} \varepsilon_{rst}([f])=\sum_{r \in \Gamma}\varepsilon_r([f])$$
and again we get the result  by density of $\mathcal R(G)^\sigma$ in $C(G)^\sigma$.
\end{proof}

We arrive at a useful criterion to show that a quotient of a twisted function algebra on compact group is still a twisted function algebra on a compact subgroup.

\begin{theorem}\label{critsub}
 Let $G$ be a compact group and let $\sigma$ be a unitary 2-cocycle on $\rep(G)$ induced by a finite abelian subgroup $\Gamma \subset G$ such that ${_\sigma \! \mathcal R(\Gamma})$ is a full matrix algebra. Let $A$ be a Woronowicz algebra quotient of $C(G)^\sigma$. Then all the irreducible representations of the $\cs^*$-algebra $A$ have dimension $\leq |\Gamma|$, and if $A$ has an irreducible representation of dimension $|\Gamma|$, then there exists a compact subgroup $\Gamma \subset K \subset G$ such that $A \simeq C(K)^\sigma$.
\end{theorem}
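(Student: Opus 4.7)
The dimension bound is immediate from Proposition \ref{maintool}(1): if $\rho$ is an irreducible representation of $A$ and $q : C(G)^\sigma \twoheadrightarrow A$ denotes the quotient, then $\rho\circ q$ is an irreducible representation of $C(G)^\sigma$ of the same dimension, hence $\dim\rho \leq |\Gamma|$.

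For the main statement, assume $A$ admits an irreducible representation $\rho$ of dimension $|\Gamma|$. My plan is to produce a Hopf $*$-algebra map $\phi : A_0 \to \rep(\Gamma)^\sigma = \rep(\Gamma)$ (where $A_0 := \rep(A)$, and the equality uses Proposition \ref{maintool}(5)) such that $\phi\circ q_0$ coincides with the restriction $\mathrm{res} : \rep(G)^\sigma \to \rep(\Gamma)$, writing $q_0$ for $q\vert_{\rep(G)^\sigma}$. Once this is established, Proposition \ref{corespsubgroup}, applied with $Q=\rep(G)$, $L=\rep(\Gamma)$ and $\pi=\mathrm{res}$, produces a Hopf $*$-algebra surjection $\rep(G)\twoheadrightarrow R$ admitting a factorization to $\rep(\Gamma)$ and an isomorphism $A_0\simeq R^\sigma$. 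Since any Hopf $*$-algebra quotient of $\rep(G)$ is of the form $\rep(K)$ for a closed subgroup $K\subset G$, and the factorization through $\rep(\Gamma)$ over $\rep(G)$ forces $\Gamma\subset K$, this gives $A_0\simeq \rep(K)^\sigma$. As $C(G)^\sigma$ is co-amenable by \cite{ba99} and co-amenability descends to quotients, both $A$ and $C(K)^\sigma$ are the unique $\cs^*$-completions of their respective dense Hopf $*$-algebras, yielding $A\simeq C(K)^\sigma$.

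Producing $\phi$ reduces to showing that every character $\varepsilon_s : C(G)^\sigma \to \C$ for $s\in\Gamma$ (supplied by Proposition \ref{maintool}(5)) factors through $q$: with lifts $\tilde\varepsilon_s : A \to \C$ in hand, the prescription $\phi(a)(s) := \tilde\varepsilon_s(a)$ satisfies $\phi\circ q_0 = \mathrm{res}$ by construction, and the Hopf $*$-algebra property transfers from $\mathrm{res}$ to $\phi$ by surjectivity of $q_0$. To establish this factorization I leverage the Hopf structure of $A$. Let $\tilde\rho := \rho\circ q$; by Proposition \ref{maintool}(2), $\tilde\rho\simeq\theta_{g_0}$ for some $g_0\in G$ with $|\Gamma g_0\Gamma|=|\Gamma|^2$. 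Consequently $|\Gamma g_0^{-1}\Gamma|=|\Gamma|^2$ as well, so $\theta_{g_0^{-1}}$ is also irreducible. By Proposition \ref{maintool}(4), $\theta_{g_0}\otimes\theta_{g_0^{-1}} \simeq \bigoplus_{s\in\Gamma}\theta_{g_0 s g_0^{-1}}$ contains $\theta_e\simeq\bigoplus_{t\in\Gamma}\varepsilon_t$ as the $s=e$ summand, and therefore contains the trivial representation $\varepsilon$; by the characterization of the dual representation recalled in Section 2.3, $\tilde\rho^\vee\simeq\theta_{g_0^{-1}}$. Since $A$ is a Hopf $*$-algebra, $\rho^\vee$ is a representation of $A$, and thus $\theta_{g_0}\otimes\theta_{g_0^{-1}}\simeq(\rho\otimes\rho^\vee)\circ q$ factors through $q$. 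A direct summand inherits any such factorization, so each $\theta_{g_0 s g_0^{-1}}$ factors through $q$; in particular $\theta_e$ does, and hence so does each of its subrepresentations $\varepsilon_t$.

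The main obstacle lies precisely here: coaxing all the $\varepsilon_s$ ($s\in\Gamma$) out of a single $|\Gamma|$-dimensional irrep of $A$ requires simultaneously using the Hopf structure of $A$ (closure under duals), the fusion formula of Proposition \ref{maintool}(4), and the diagonal decomposition of $\theta_e$ from Proposition \ref{maintool}(5). Once this factorization is achieved, everything else is a bookkeeping invocation of the cocycle-twist correspondence of Proposition \ref{corespsubgroup}.
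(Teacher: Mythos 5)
Your proof is correct and follows essentially the same route as the paper: identify the $|\Gamma|$-dimensional irreducible representation with some $\theta_{g_0}$, use the fusion rule $\theta_{g_0}\otimes\theta_{g_0^{-1}}\simeq\oplus_{s\in\Gamma}\theta_{g_0 s g_0^{-1}}$ together with the decomposition $\theta_e\simeq\oplus_{t\in\Gamma}\varepsilon_t$ and closure of $\widehat{A}$ under duals to factor every $\varepsilon_s$ through the quotient, and then conclude via Proposition \ref{corespsubgroup}. Your write-up is in fact somewhat more careful than the paper's (e.g.\ in checking irreducibility of $\theta_{g_0^{-1}}$ before invoking the characterization of the dual, and in spelling out the passage from the Hopf-algebra level to the $\cs^*$-level via co-amenability).
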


\begin{proof}
We are in the situation of Proposition \ref{maintool}, since the algebras ${_\sigma \! \mathcal R(\Gamma})$ and $\mathcal R(\Gamma)_{\sigma^{-1}}$ are anti-isomorphic.   Thus if $\rho$ is an irreducible representation of $A$ of dimension $|\Gamma|$, then $\rho\pi$ is also an irreducible representation of $C(G)^\sigma$ (with  $\pi : C(G)^\sigma \rightarrow A$ being the given quotient map), and so there exists $g \in G$ such that $\rho\pi\simeq \theta_g$. That is, $\theta_g$ factors through a representation of $A$. The isomorphisms from \ref{maintool}
 $$\theta_g \otimes \theta_{g^{-1}} \simeq \oplus_{s \in \Gamma} \theta_{gsg^{-1}}\simeq \theta_{1} \oplus ( \oplus_{s \in \Gamma, s\not=1} \theta_{gsh})\simeq 
(\oplus_{s \in \Gamma}\varepsilon_s)\oplus (\oplus_{s \in \Gamma, s\not=1} \theta_{gsh}) $$
show that $\theta_{g^{-1}}$ is the dual of the representation $\theta_g$ of $C(G)^{\sigma}$. Thus, $\theta_{g^{-1}}$ factors through a representation of $A$, as do all the simple constituents of  $\theta_g \otimes \theta_{g^{-1}}$. In particular, each $\varepsilon_s$, $s \in \Gamma$, defines a representation $A$, and we get a surjective $*$-algebra map $A \rightarrow \rep(\Gamma)$. We conclude by Proposition \ref{corespsubgroup}.
\end{proof}

\section{Application to $SU_{-1}(2m+1)$ and $U_{-1}(2m+1)$}

From now on we assume that $n=2m+1$ is odd.
We recall how the quantum groups   $SU_{-1}(2m+1)$ and $U_{-1}(2m+1)$ can be obtained by 2-cocycle deformation, using a 2-cocycle induced from the group $\mathbb Z_2^{2m}$, and then use the results of the previous section to get information on their quantum subgroups.

We denote by $\mathbb Z_2$ the cyclic group on two elements, and we use the identification
$$\mathbb Z_2^{2m} = \langle t_1, \ldots , t_{2m+1} \ | \ t_it_j=t_it_j, \ t_1^2=\cdots = t_{2m+1}^2= 1= t_1 \cdots t_{2m+1} \rangle$$
Let $\sigma : \mathbb Z_2^{2m} \times \mathbb Z_2^{2m} \rightarrow \{\pm 1\}$ be the unique bicharacter such that
$$\sigma(t_i,t_j)= 
                    -1  = -\sigma(t_j,t_i) \ \text{for $1\leq i<j \leq 2m$}$$
$$\sigma(t_i,t_i)=(-1)^m  \ \text{for $1 \leq i\leq 2m+1$}$$ 
$$\sigma(t_i,t_{2m+1})= (-1)^{m-i} = -\sigma(t_{2m+1},t_i)  \ \text{ for $1\leq i\leq 2m$}
                   $$
It is well-known that the twisted group algebra $\C_\sigma \mathbb Z_2^{2m}$ is isomorphic to the matrix algebra
$M_{2^{m}}(\C)$.

There exists a surjective Hopf $*$-algebra morphism 
\begin{align*}
 \pi : \mathcal R(SU(2m+1)) &\rightarrow \C \mathbb Z_2^{2m} \\
u_{ij} & \longmapsto \delta_{ij}t_{i}
\end{align*}
induced by the restriction of functions to $\Gamma$, the subgroup of $SU(2m+1)$ formed by diagonal matrices having $\pm 1$ as entries, composed with the Fourier transform $\rep(\Gamma) \simeq \C \widehat{\Gamma} \simeq  \C  \mathbb Z_2^{2m}$. Thus we may form the twisted Hopf algebra $\mathcal R(SU(2m+1))^\sigma$, and it is not difficult to check that there exists a surjective Hopf $*$-algebra map $\mathcal R(SU_{-1}(2m+1)) \rightarrow \mathcal R(SU(2m+1))^\sigma$, $u_{ij} \mapsto [u_{ij}]$, which is known to be an isomorphism (there are several ways to show this, a simple one being to invoke the presentation Theorem 3.5 in \cite{gkm}). Hence we have $C(SU_{-1}(2m+1))  \simeq  C(SU(2m+1))^\sigma$, with $\sigma$ induced from the subgroup $\Gamma \simeq \mathbb Z_2^{2m}$, and we are in the framework of Theorem \ref{critsub}. Similarly $C(U_{-1}(2m+1))  \simeq  C(U(2m+1))^\sigma$.

 If $K$ is a compact subgroup of $SU(2m+1)$ with $\Gamma \subset K$, we denote by $K_{-1}$ the compact quantum group corresponding to the Woronowicz algebra $C(K)^\sigma$. With this language, the following result is an immediate consequence of Theorem \ref{critsub}.

\begin{theorem}\label{subsu(2m+1)}
 Let $G$ be a compact quantum subgroup of $SU_{-1}(2m+1)$. Then all the irreducible representations of the $\cs^*$-algebra $C(G)$  have dimension $\leq 4^m$, and if   $C(G)$ has an irreducible dimension
of dimension $4^{m}$, then there exists a compact subgroup $\Gamma \subset K \subset SU(2m+1)$ such that $G \simeq K_{-1}$.
\end{theorem}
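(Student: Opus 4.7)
The plan is to apply Theorem \ref{critsub} directly with $G$ replaced by the classical group $SU(2m+1)$ and with the 2-cocycle $\sigma$ constructed at the beginning of this section. First I would verify the two hypotheses of that theorem. The subgroup $\Gamma \subset SU(2m+1)$ of diagonal matrices with $\pm 1$ entries is a finite abelian group isomorphic to $\mathbb Z_2^{2m}$, so in particular $|\Gamma| = 4^m$. The cocycle $\sigma$ on $\rep(SU(2m+1))$ is by construction induced, via the restriction-of-functions surjection $\pi : \rep(SU(2m+1)) \to \rep(\Gamma)$, from the cocycle on $\rep(\Gamma) \simeq \C\mathbb Z_2^{2m}$ defined at the start of the section. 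The twisted group algebra ${_\sigma\!\rep(\Gamma)} \simeq \C_\sigma \mathbb Z_2^{2m}$ is isomorphic to $M_{2^m}(\C)$, as recalled above, so it is a full matrix algebra.

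Next I would translate a compact quantum subgroup $G \subset SU_{-1}(2m+1)$ into a Woronowicz algebra quotient of $C(SU(2m+1))^\sigma$. By the definition of quantum subgroups recalled in Section 2, $G$ corresponds to a surjective Woronowicz algebra morphism $C(SU_{-1}(2m+1)) \twoheadrightarrow C(G)$. Using the isomorphism $C(SU_{-1}(2m+1)) \simeq C(SU(2m+1))^\sigma$ established just before the statement, this becomes a surjective morphism $C(SU(2m+1))^\sigma \twoheadrightarrow C(G)$, i.e.\ $C(G)$ is a Woronowicz algebra quotient of $C(SU(2m+1))^\sigma$ in the sense required by Theorem \ref{critsub}.

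At this point, Theorem \ref{critsub} applies with $A = C(G)$ and gives both conclusions at once: every irreducible representation of $C(G)$ has dimension at most $|\Gamma| = 4^m$, and if $C(G)$ admits an irreducible representation of dimension exactly $4^m$, then there exists a compact subgroup $\Gamma \subset K \subset SU(2m+1)$ with $C(G) \simeq C(K)^\sigma$. By the definition of $K_{-1}$ given immediately before the statement, the latter isomorphism is precisely $G \simeq K_{-1}$, which is the claim.

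There is essentially no obstacle in this argument itself; it is a clean instance of Theorem \ref{critsub}. All the substantive work has already been done beforehand: the representation-theoretic analysis of twisted function algebras in Section 3 culminating in Proposition \ref{maintool} and Theorem \ref{critsub}, the cocycle construction at the start of the present section, and the identification of $SU_{-1}(2m+1)$ as the twist $SU(2m+1)^\sigma$. The only point at which I would pause is to confirm that $|\Gamma|^2 = 4^{2m}$ is consistent with the dimension count ${_\sigma\!\rep(\Gamma)} \otimes \rep(\Gamma)_{\sigma^{-1}} \simeq M_{2^m}(\C) \otimes M_{2^m}(\C) \simeq M_{4^m}(\C)$, which is exactly what underpins the bound $|\Gamma|=4^m$ in the conclusion.
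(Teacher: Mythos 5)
Your proposal is correct and follows exactly the paper's route: the paper presents this theorem as an immediate consequence of Theorem \ref{critsub}, given the identification $C(SU_{-1}(2m+1))\simeq C(SU(2m+1))^\sigma$ with $\sigma$ induced from $\Gamma\simeq\mathbb Z_2^{2m}$ and the fact that $\C_\sigma\mathbb Z_2^{2m}\simeq M_{2^m}(\C)$ is a full matrix algebra. Your verification of the hypotheses and the dimension count $|\Gamma|=2^{2m}=4^m$ is exactly the intended argument.
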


A similar statement holds as well with $SU_{-1}(2m+1)$ replaced by $U_{-1}(2m+1)$. 

\section{Quantum subgroups of $SU_{-1}(3)$}

This section is devoted to the proof of Theorem \ref{subSu3}. We first need  some preliminary results, and  we begin by fixing some notation.

For a permutation $\nu \in S_3$, we put
$$SU(3)^{\nu}= \{ g=(g_{ij}) \in SU(3) \ | \ g_{ij}=0 \ {\rm if} \ \nu(j)\not=i\}$$
and also
$$SU(3)^{\rm \Sigma} = \cup_{\nu \in S_3} SU(3)^{\nu}.$$
For $g \in SU(3)^\Sigma$, we denote by $\nu_g$ the unique element of $S_3$ such that $g \in SU(3)^{\nu_g}$.

The following result is easily verified (and has an obvious generalization for any $n$).

\begin{lemma}\label{1rep}
 Any element $g =(g_{ij}) \in SU(3)^\Sigma$  defines a $*$-algebra map $\varepsilon_g : C(SU_{-1}(3)) \rightarrow \C$ such that $\varepsilon_g(u_{ij})=\epsilon(\nu_g)g_{ij}$ (where $\epsilon(\nu_g)$ is the signature of $\nu_g$). Conversely any $1$-dimensional representation of $C(SU_{-1}(3))$ arises in this way.
\end{lemma}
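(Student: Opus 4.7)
The plan is to verify both directions by a direct check on the generators $u_{ij}$, using the three defining relation types of $\rep(SU_{-1}(3))$: unitarity, the anticommutation $u_{ij}u_{kl} = (-1)^{\delta_{ik}+\delta_{jl}}u_{kl}u_{ij}$, and $D=1$.

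For the forward direction, given $g \in SU(3)^{\nu_g}$, I would declare $\varepsilon_g(u_{ij}) := \epsilon(\nu_g)g_{ij}$ and verify these three relations. Unitarity of the image matrix is immediate since $\epsilon(\nu_g)g \in U(3)$, which also handles $*$-compatibility. The anticommutation reduces, because the target $\C$ is commutative, to showing $\varepsilon_g(u_{ij})\varepsilon_g(u_{kl}) = 0$ whenever $\delta_{ik}+\delta_{jl}$ is odd; this follows from the monomial shape of $g$, since two entries $g_{ij}$ and $g_{kl}$ can both be nonzero only when $i=\nu_g(j)$ and $k=\nu_g(l)$, and injectivity of $\nu_g$ then forces $i=k \Leftrightarrow j=l$. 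For $D=1$, only the term $\sigma=\nu_g^{-1}$ in the sum $\sum_\sigma u_{1\sigma(1)}u_{2\sigma(2)}u_{3\sigma(3)}$ survives under $\varepsilon_g$; comparing with the classical determinant expansion $\det(g)=\epsilon(\nu_g)\prod_j g_{\nu_g(j),j}=1$ yields $\varepsilon_g(D)=\epsilon(\nu_g)^4=1$.

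For the converse, let $f : C(SU_{-1}(3))\to\C$ be a $*$-algebra map and set $M_{ij}:=f(u_{ij})$. Unitarity of the matrix $u$ implies $M \in U(3)$. The anticommutation relation, read inside $\C$, forces $M_{ij}M_{kl}=0$ whenever exactly one of $i=k$ or $j=l$ holds; hence each row and column of $M$ has at most one nonzero entry, so $M$ is monomial with some permutation $\nu\in S_3$ and nonzero entries of modulus $1$. Applying $f$ to $D$ then picks out the single permutation $\sigma=\nu^{-1}$ and shows that the product of the nonzero entries equals $1$. Rescaling $g:=\epsilon(\nu)M$ gives a monomial unitary matrix with $\det g = \epsilon(\nu)^4=1$, so $g\in SU(3)^\nu$ and $\nu_g=\nu$; unwinding the definition gives $\varepsilon_g(u_{ij})=\epsilon(\nu)^2 M_{ij}=f(u_{ij})$, so $f=\varepsilon_g$.

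The one point requiring attention throughout is the careful bookkeeping of the sign $\epsilon(\nu_g)$ and the distinction between $\nu$ and $\nu^{-1}$ in the determinant computations (since the sign is precisely what reconciles the two expansions of $D$ in the paper's definition with the classical $\det$); apart from that, both directions are routine verifications once the monomial structure is extracted from the anticommutation relations.
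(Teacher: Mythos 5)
Your proof is correct, and it fills in exactly the routine verification that the paper omits (the authors state the lemma with only the remark that it ``is easily verified''). Both directions --- extracting the monomial structure of the image matrix from the anticommutation relations read in the commutative target $\C$, and reconciling the two expansions of $D$ with the classical determinant via the sign $\epsilon(\nu_g)$ --- are handled correctly, so nothing further is needed.
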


As is the previous section, the subgroup of $SU(3)$ formed by diagonal matrices having $\pm 1$ as entries
is denoted $\Gamma$.
In the case $g\in\Gamma$, then $\varepsilon_g$ is of course the representation of the same name from Proposition \ref{maintool}.

We denote by
$SU(3)_{\rm reg}$  the subset of matrices in $SU(3)$ for which there exists a row or a column having no zero coefficient. 

 Recall from Section 4 and Proposition \ref{maintool} that each $g \in SU(3)$ defines a representation
$$\theta_g : C(SU_{-1}(3)) \longrightarrow \C_\sigma\Gamma \otimes \C_\sigma\Gamma \simeq M_2(\C) \otimes M_2(\C) \simeq M_4(\C)$$
The twisted group algebra $\C_\sigma\Gamma$ is presented by generators $T_1$, $T_2$, $T_3$ and relations $T_1^2=-1=T_2^2=T_3^2$, $1=T_{1}T_{2}T_3$, $T_iT_j=-T_jT_i$ if $i \not=j$
(where in the notation of the previous sections, $T_i= \{t_i\}=\langle t_i\rangle$).
With this notation, the representation $\theta_g$ ($g \in SU(3)$) has the following form
\begin{align*}
 \theta_{g} : C(SU_{-1}(3)) &\longrightarrow \C_\sigma\Gamma \otimes \C_\sigma\Gamma \\
u_{ij} &\longmapsto g_{ij} T_i \otimes T_j
\end{align*}

\begin{lemma}\label{critirred}
 The representation $\theta_g$ is irreducible if and only if $g \in SU(3)_{\rm reg}$. If $g \in SU(3)^{\Sigma}$, then $\theta_g$ is isomorphic to a direct sum of one-dimensional representations.
\end{lemma}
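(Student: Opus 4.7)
The plan is to combine Proposition \ref{maintool}(2) with a case analysis of the zero pattern of $g$. Proposition \ref{maintool}(2) says that $\theta_g$ is irreducible iff the stabilizer
\[H_g:=\{(s,t)\in\Gamma\times\Gamma\mid sgt=g\}\]
is trivial. Writing $s=\mathrm{diag}(a_1,a_2,a_3)$, $t=\mathrm{diag}(b_1,b_2,b_3)$ with $a_i,b_j\in\{\pm1\}$ and $a_1a_2a_3=b_1b_2b_3=1$, the condition $sgt=g$ translates to $a_ib_j=1$ whenever $g_{ij}\neq 0$. This combinatorial reformulation, which is invariant under permuting rows and columns of $g$, drives the whole proof.

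The easy direction is the sufficiency in the first statement: if $g\in SU(3)_{\rm reg}$ and, say, row $i_0$ has no zero, then $a_{i_0}b_j=1$ for each $j$ forces $b_j=a_{i_0}$, so $1=b_1b_2b_3=a_{i_0}^3=a_{i_0}$, whence $t=1$; then unitarity of $g$ (every row contains a nonzero entry) forces $a_i=1$ for all $i$, so $s=1$ and $H_g$ is trivial. The case of a full column is handled symmetrically.

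For the second statement, assume $g\in SU(3)^\nu\subset SU(3)^\Sigma$. Then $\theta_g(u_{ij})=0$ for $i\neq\nu(j)$, so the image of $\theta_g$ is generated by the three elements $A_j:=T_{\nu(j)}\otimes T_j$, $j=1,2,3$. Using $T_k^*=-T_k$, $T_k^2=-1$, and $T_kT_\ell=-T_\ell T_k$ for $k\neq\ell$, a direct computation gives $A_j^*=A_j$, $A_j^2=1$, and $A_jA_{j'}=A_{j'}A_j$ (the two anticommutation signs in the tensor factors cancel). Hence the image of $\theta_g$ is a commutative $*$-subalgebra of $M_4(\C)$; simultaneous diagonalisation of the three commuting self-adjoint $A_j$ yields an orthonormal basis of $\C^4$ on which $\theta_g$ is diagonal, exhibiting $\theta_g$ as a direct sum of four one-dimensional representations (each of the form $\varepsilon_h$ for some $h\in SU(3)^\Sigma$, by Lemma \ref{1rep}). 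In particular $\theta_g$ is not irreducible.

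The remaining case, $g\notin SU(3)_{\rm reg}\cup SU(3)^\Sigma$, is where the main bookkeeping lies. I claim that, up to permuting rows and columns, $g$ has the block form $g=\mathrm{diag}(h,e)$ with $h\in U(2)$ having all four entries nonzero and $|e|=1$. Granting this, $s=t=\mathrm{diag}(-1,-1,1)\in\Gamma$ manifestly satisfies $sgt=g$, giving a nontrivial element of $H_g$, so $\theta_g$ is reducible. The structural claim is established by case analysis: since $g\notin SU(3)^\Sigma$, some row has two nonzero entries, WLOG $g_{11},g_{12}\neq 0$ and $g_{13}=0$; the orthogonality relation $\bar g_{11}g_{21}+\bar g_{12}g_{22}=0$, the unit norm of column 3, and the requirement that each remaining row and column contains a zero together force, after possibly swapping rows 2 and 3, precisely the claimed block form. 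This structural classification of zero patterns compatible with unitarity is the main obstacle; once it is in hand, both parts of the lemma follow immediately.
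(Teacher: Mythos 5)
Your proof is correct and follows essentially the same route as the paper: the irreducibility criterion is read off from Proposition \ref{maintool}(2) via the stabilizer $\{(s,t)\in\Gamma\times\Gamma\mid sgt=g\}$, and the second assertion comes from the commutativity of the image $\theta_g(C(SU_{-1}(3)))$ when $g\in SU(3)^{\Sigma}$. The paper simply states that the first assertion ``follows directly'' from Proposition \ref{maintool}(2); your combinatorial analysis of the zero patterns (including the block-form classification of matrices outside $SU(3)_{\rm reg}\cup SU(3)^{\Sigma}$) is exactly the verification being left to the reader there, and it is carried out correctly.
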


\begin{proof}
 The first assertion follows directly from (2) in Proposition \ref{maintool}. The second assertion follows from the fact that if $g \in SU(3)^{\Sigma}$, the algebra $\theta_g(C(SU_{-1}(3))$ is commutative (this is clear from the above description of $\theta_g$).
\end{proof}

Our next aim is to describe the tensor products $\varepsilon_g \otimes \theta_h$.

\begin{lemma}\label{fusion}
 Let $g \in SU(3)^\Sigma$ and let $h \in SU(3)$.  Then the representations $\varepsilon_g \otimes \theta_h$ and $ \theta_{g h}$ are isomorphic.
\end{lemma}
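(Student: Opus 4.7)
The plan is to compute both representations explicitly on the matrix coefficients $u_{ij}$ and exhibit a unitary intertwiner of the form $V\otimes 1$, where $V$ is a unitary in $\C_\sigma\Gamma\cong M_2(\C)$.

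Using $\theta_h(u_{ij}) = h_{ij}\,T_i\otimes T_j$, the formula $\varepsilon_g(u_{ik}) = \epsilon(\nu_g)\, g_{ik}$ from Lemma~\ref{1rep}, and the comultiplication $\Delta(u_{ij}) = \sum_k u_{ik}\otimes u_{kj}$, a direct calculation gives
$$(\varepsilon_g\otimes\theta_h)(u_{ij}) \;=\; \epsilon(\nu_g)\, g_{i,\nu_g^{-1}(i)}\, h_{\nu_g^{-1}(i),j}\,(T_{\nu_g^{-1}(i)}\otimes T_j),$$
since $g\in SU(3)^{\nu_g}$ forces $k=\nu_g^{-1}(i)$ to be the only surviving index. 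On the other hand
$$\theta_{gh}(u_{ij}) \;=\; (gh)_{ij}\, T_i\otimes T_j \;=\; g_{i,\nu_g^{-1}(i)}\, h_{\nu_g^{-1}(i),j}\,(T_i\otimes T_j).$$
Setting $k=\nu_g^{-1}(i)$, the desired intertwining relation $U(\varepsilon_g\otimes\theta_h)(u_{ij}) = \theta_{gh}(u_{ij})\,U$ reduces, after cancelling the (possibly zero) scalar coefficients, to the single requirement
$$U(T_k\otimes T_j)\,U^{-1} \;=\; \epsilon(\nu_g)\,(T_{\nu_g(k)}\otimes T_j) \qquad\text{for all } k,j\in\{1,2,3\}.$$

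The heart of the argument is to realize this by $U=V\otimes 1$ with a unitary $V\in\C_\sigma\Gamma$, i.e.\ to show that the assignment $\phi\colon T_k\mapsto \epsilon(\nu_g)\,T_{\nu_g(k)}$ extends to a $*$-automorphism of $\C_\sigma\Gamma$. The relations $T_k^2=-1$ and $T_iT_j=-T_jT_i$ ($i\neq j$) are preserved because $\epsilon(\nu_g)^2=1$. The relation $T_1T_2T_3=1$ is preserved thanks to the identity $T_{\nu(1)}T_{\nu(2)}T_{\nu(3)}=\epsilon(\nu)$ for every $\nu\in S_3$, which follows from the anticommutation relations by counting transpositions. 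For the $*$-structure one uses $T_k^*=-T_k$, obtained by applying the involution formula of $_\sigma\rep(\Gamma)$ to the group-like $t_k$ together with $\sigma(t_k,t_k)=(-1)^m=-1$ (here $m=1$). Since $\C_\sigma\Gamma\cong M_2(\C)$ is a central simple $*$-algebra, the Skolem--Noether theorem (in its $*$-version) guarantees that $\phi$ is inner, implemented by conjugation by some unitary $V\in U(2)$.

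Taking $U=V\otimes 1$ then intertwines $(\varepsilon_g\otimes\theta_h)(u_{ij})$ and $\theta_{gh}(u_{ij})$ for every $i,j$, and hence on all of $C(SU_{-1}(3))$ by density. The only substantive obstacle is the algebraic bookkeeping check that $\phi$ respects the defining relations of $\C_\sigma\Gamma$, which ultimately reduces to the identity $T_{\nu(1)}T_{\nu(2)}T_{\nu(3)}=\epsilon(\nu)$; once that is in hand, the argument becomes essentially formal and Skolem--Noether does the rest.
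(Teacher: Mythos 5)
Your proof is correct and follows essentially the same route as the paper: compute $(\varepsilon_g\otimes\theta_h)\Delta(u_{ij})$ and $\theta_{gh}(u_{ij})$ explicitly, observe that they differ by the assignment $T_k\mapsto\epsilon(\nu_g)T_{\nu_g(k)}$ on the first tensor factor, check that this extends to a $*$-automorphism of $\C_\sigma\Gamma$, and conclude it is inner since $\C_\sigma\Gamma$ is a matrix algebra. You merely make explicit the two steps the paper labels ``straightforward'' and ``necessarily inner'' (the relation $T_{\nu(1)}T_{\nu(2)}T_{\nu(3)}=\epsilon(\nu)$ and the Skolem--Noether argument), and both are verified correctly.
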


\begin{proof}
Put  $g =(\delta_{i, \nu(j)}a_i)$ with $\nu \in S_3$. We have, for any $i,j$, 
$$(\varepsilon_g \otimes \theta_{h})\Delta(u_{ij}) = \sum_k\varepsilon_g(u_{ik})h_{kj} T_k \otimes T_j
=\epsilon(\nu)a_ih_{\nu^{-1}(i)j} T_{\nu^{-1}(i)}\otimes T_j$$
It is straightforward to check that there exists an automorphism
$\alpha_\nu$ of $\mathbb C_\sigma \Gamma$ such that $\alpha_\nu(T_i)=\varepsilon(\nu)T_{\nu(i)}$ for any $i$. We have 
$$\alpha_\nu \otimes {\rm id}(\varepsilon_g \otimes \theta_{h})\Delta(u_{ij}) 
=a_ih_{\nu^{-1}(i)j}  T_{i}\otimes T_j=\theta_{g h}(u_{ij})$$
and hence, since $\alpha_\nu$ is (necessarily) an inner automorphism of the matrix algebra $\C_\sigma\Gamma$, we conclude that the representations $\varepsilon_g \otimes \theta_h$ and $ \theta_{g h}$ are isomorphic.
\end{proof}

Before going into the proof of Theorem \ref{subSu3}, we need a final piece of notation. For $1\leq i,j \leq 3$, we put 
$$SU(3)^{[i,j]}= \{ g=(g_{ij}) \in SU(3) \ | \ g_{ik}=0 \ {\rm if} \ k\not=j, \ g_{kj}=0 \ {\rm if } \ i\not=k, \ g \not \in SU(3)^{\Sigma}\}$$ 

\begin{proof}[Proof of Theorem \ref{subSu3}]
 Let $G \subset SU_{-1}(3)$ be a non-classical compact quantum subgroup, with corresponding surjective Woronowicz algebra map $\pi : C(SU_{-1}(3)) \rightarrow C(G)$. Recall that we have to prove that one of the following assertion holds.
\begin{enumerate}
 \item There exists a compact subgroup $\Gamma \subset K \subset SU(3)$ such that $G$ is isomorphic to $K_{-1}$.
\item $G$ is isomorphic to a quantum subgroup of $U_{-1}(2)$. 
\end{enumerate}
We already know from Theorem \ref{subsu(2m+1)} that if $C(G)$ has an irreducible representation of dimension 4, then (1) holds. So we assume that $C(G)$ has all its irreducible representation of dimension $<4$.

We denote by $X$ the set of (isomorphism classes) of irreducible representations of $C(G)$ having dimension $d$ satisfying $1<d<4$.  We remark that $X$ is non-empty since $C(G)$ is non-commutative.

Let $\rho \in X$. Then $\rho$ defines an irreducible representation $\rho\pi$ of $C(SU_{-1}(3))$, and hence by Proposition \ref{maintool} there exists $g \in SU(3)$ such that $\rho\pi \prec \theta_g$. If $g \in SU(3)_{\rm reg}$, then by Lemma \ref{critirred} $\theta_g$ is irreducible and $\rho\pi\simeq \theta_g$ has dimension 4, which contradicts our assumptions. Hence
$g \not \in SU(3)_{\rm reg}$. If $g  \in SU(3)^{\Sigma}$, then by Lemma \ref{critirred} $\theta_g$ is a direct sum of representations of dimension $1$, hence $\rho$ has dimension $1$, which again contradicts our assumption, and hence  $g \not \in SU(3)^{\Sigma}$. Thus there exist $i,j$ such that $g \in SU(3)^{[i,j]}$. Suppose that $i \not=j$. Then 
$\rho\pi \otimes \rho\pi \prec \theta_{g} \otimes \theta_g \simeq \oplus_{s \in \Gamma}\theta_{gsg}$ (by Proposition \ref{maintool}). For any $s \in \Gamma$, $sg \in SU(3)^{[i,j]}$ and it is a direct computation to check that $gsg \in SU(3)_{\rm reg}$, so the constituents of this decomposition are irreducible representations. By a dimension argument there exists 
$s \in \Gamma$ such that $\rho\pi \otimes \rho\pi \simeq \theta_{gsg}$, and hence $\rho \otimes \rho$ is irreducible of dimension $4$; this is a contradiction.

We have thus proved that for any $\rho \in X$, there exists $i \in \{1,2,3\}$ and $g  \in SU(3)^{[i,i]}$ such that 
$\rho\pi \prec \theta_g$. Assume that there exist $\rho,\rho' \in X$ with $\rho\pi \prec \theta_g$, $\rho'\pi \prec \theta_{g'}$ for $g \in SU(3)^{[i,i]}$, $g'\in SU(3)^{[j,j]}$ and $i \not=j$. Then $\rho\pi \otimes \rho'\pi \prec \theta_{g} \otimes \theta_{g'} \simeq \oplus_{s \in \Gamma}\theta_{gsg'}$. Once again, for any $s \in \Gamma$, $gsg' \in SU(3)_{\rm reg}$, and we conclude as before that $\rho \otimes \rho'$ is an irreducible representation of dimension $4$, a contradiction.

Thus we have proved that there exists $i \in \{1,2,3 \}$ such that for any $\rho \in X$, we have $\rho\pi \prec \theta_g$ for some $g \in SU(3)^{[i,i]}$, and hence $\rho\pi(u_{ik})=0=\rho\pi(u_{ki})$ for any $k \not=i$ and $\rho \in X$.

Let $\phi$ be a $1$-dimensional representation of $C(G)$. By Lemma \ref{1rep}, there exists $\nu \in S_3$ and $g \in SU(3)^\nu$  such that $\phi\pi= \varepsilon_g$.  Let $\rho \in X$ with $\rho\pi \prec \theta_h$ for $h \in SU(3)^{[i,i]}$. Then $\phi\pi \otimes \rho\pi \prec \varepsilon_g \otimes \theta_h\simeq \theta_{g h}$ by   Lemma \ref{fusion}. It is straightforward to check that $g h \in SU(3)^{[\nu(i),i]}$. By a previous case we must have $\nu(i)=i$. Hence  $\phi\pi(u_{ik})=0=\phi\pi(u_{ki})$ for any $k \not=i$. 

Summarizing, we have shown that for any $\rho \in \widehat{C(G)}$, we have  $\rho\pi(u_{ik})=0=\rho\pi(u_{ki})$ for any $k \not=i$. The irreducible representations of a $\cs^*$-algebra separate its elements, so we conclude that  $\pi(u_{ik})=0=\pi(u_{ki})$ for any $k \not=i$, and by Lemma \ref{reduc}, we are in situation (2). This concludes the proof.
\end{proof}

\begin{corollary}\label{irred}
 Let $G$ be a non-classical compact quantum subgroup of $SU_{-1}(3)$ acting irreducibly on $\C^3$.
Then  $G$ is isomorphic to a $K_{-1}$, a twist at $-1$ of a compact subgroup $K \subset SU(3)$ containing the subgroup of diagonal matrices having $\pm 1$ as entries, and acting irreducibly on $\C^3$.
\end{corollary}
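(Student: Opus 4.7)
The plan is to invoke Theorem \ref{subSu3} to dichotomize, dispose of one branch using irreducibility, and then transport irreducibility across the twist equivalence in the remaining branch.

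First I would apply Theorem \ref{subSu3}: either $G\simeq K_{-1}$ for some compact subgroup $\Gamma\subset K\subset SU(3)$, or $G$ is a quantum subgroup of $U_{-1}(2)$. To rule out the second alternative, I would examine the information actually produced by the proof of Theorem \ref{subSu3}: namely, that case (2) occurs precisely through the mechanism of Lemma \ref{reduc}, i.e.\ there exists an index $i\in\{1,2,3\}$ with $\pi(u_{ki})=0=\pi(u_{ik})$ for all $k\neq i$, where $\pi:\rep(SU_{-1}(3))\to\rep(G)$ denotes the defining surjection. The coaction of $\rep(G)$ on $\C^3$ sends $e_j\mapsto \sum_k e_k\otimes \pi(u_{kj})$, so the vanishing relations above exhibit both $\C e_i$ and $\mathrm{span}(e_k : k\neq i)$ as proper subcomodules, contradicting the assumption that $G$ acts irreducibly on $\C^3$. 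Hence we must be in case (1).

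Second, having reduced to $G\simeq K_{-1}$ with $\Gamma\subset K\subset SU(3)$, I would show that $K$ itself acts irreducibly on $\C^3$. The point is that the twisting construction of Section 3 provides an equivalence of tensor categories of (unitary) comodules $\rep(K)\text{-Comod}\simeq \rep(K)^\sigma\text{-Comod}$, and under this equivalence the fundamental $3$-dimensional comodule on one side corresponds to the fundamental $3$-dimensional comodule on the other (the underlying vector space and coalgebra structure are unchanged by twisting). Since being a simple object, i.e.\ $\mathrm{End}(V)=\C$, is a categorical invariant, irreducibility of the fundamental representation of $K_{-1}$ transfers to irreducibility of the fundamental representation of $K$.

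I do not anticipate a genuine obstacle: Theorem \ref{subSu3} has essentially done the work, and the irreducibility hypothesis interacts cleanly with each of the two alternatives: in case (2) it is straightforwardly violated by the block structure forced by Lemma \ref{reduc}, and in case (1) it is preserved in both directions by the comodule-category equivalence underlying the twist. The only point requiring mild care is extracting the vanishing statement $\pi(u_{ki})=\pi(u_{ik})=0$ from the proof of Theorem \ref{subSu3}, but this is explicit at the last step of that proof.
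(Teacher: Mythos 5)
Your argument is correct and follows essentially the same route as the paper: both rely on the information extracted from the proof of Theorem \ref{subSu3} (namely that the absence of a $4$-dimensional irreducible representation of $C(G)$ forces the vanishing relations $\pi(u_{ik})=0=\pi(u_{ki})$, which make the fundamental comodule reducible), and both transfer irreducibility from $K_{-1}$ to $K$ via the unchanged coalgebra/comodule category under twisting. The paper states this more tersely, but the content is identical.
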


\begin{proof}
We have shown in the previous proof that if $C(G)$ does not have an irreducible representation of dimension $4$, then the fundamental $3$-dimensional representation of $G$ is not irreducible. Thus if $G$ acts irreducibly on $\C^3$, there exist an irreducible representation of dimension 4 of $C(G)$ and    a compact subgroup $\Gamma \subset K \subset SU(3)$ such that $G$ is isomorphic to $K_{-1}$, and $K$ acts irreducibly on $\C^3$ since $G$ does.
\end{proof}

\begin{remark}{\rm 
 The proof of Theorem 1.1 works as well by replacing $SU(3)$ by $SO(3)$. In particular one  recovers, under a less precise form, the results of \cite{bb09}: if  $G \subset SO_{-1}(3)$ is a non-classical compact quantum subgroup, then either there exists a compact subgroup $\Gamma \subset K \subset SO(3)$ such that $G$ is isomorphic to $K_{-1}$ or
$G$ is isomorphic to a quantum subgroup of $O_{-1}(2)$. }
\end{remark}

\begin{remark}{\rm
 Corollary \ref{irred} also holds with $SU_{-1}(3)$ replaced by $U_{-1}(3)$ (and $SU(3)$ replaced by $U(3)$), with a similar proof.
}
\end{remark}

\end{document}